\title[Cucker-Smale model with randomly switching network]{On the stochastic flocking of the Cucker-Smale flock with randomly switching topologies}
\author[Dong]{Jiu-Gang Dong}
\address[Jiu-Gang Dong]{\newline Department of Mathematics, Harbin Institute of Technology,\newline Harbin 150001, P. R. China}
\email{jgdong@hit.edu.cn}
\author[Ha]{Seung-Yeal Ha}
\address[Seung-Yeal Ha]{\newline Department of Mathematical Sciences and Research Institute of Mathematics, \newline Seoul National University, Seoul 08826 \newline
and School of Mathematics, Korea Institute for Advanced Study,\newline
 Hoegiro 85, Seoul 02455, Korea (Republic of)}
\email{syha@snu.ac.kr}
\author[Jung]{Jinwook Jung}
\address[Jinwook Jung]{\newline Department of Mathematical Sciences, Seoul National University,\newline Seoul 08826, Korea (Republic of)}
\email{warp100@snu.ac.kr}
\author[Kim]{Doheon Kim}
\address[Doheon Kim]{\newline School of Mathematics, Korea Institute for Advanced Study,\newline Hoegiro 85, Seoul 02455, Korea (Republic of)}
\email{doheonkim@kias.re.kr}
\newtheorem{theorem}{Theorem}[section]
\newtheorem{lemma}{Lemma}[section]
\newtheorem{corollary}{Corollary}[section]
\newtheorem{proposition}{Proposition}[section]
\newtheorem{remark}{Remark}[section]
\newtheorem{definition}{Definition}[section]
\newcommand{\bbr}{\mathbb R}
\newcommand{\bbn} {\mathbb N}
\newcommand{\e}{\varepsilon}
\newcommand{\G}{\mathcal G}
\newcommand{\D}{\mathcal D}
\newcommand{\E}{\mathcal E}
\def\charf {\mbox{{\text 1}\kern-.30em {\text l}}}
\begin{document}

\date{\today}

\subjclass[2010]{39A11, 39A12, 34D05, 68M10} 
\keywords{Cucker-Smale model, randomly switching topology, directed graph, flocking}

\allowdisplaybreaks

\thanks{\textbf{Acknowledgment.} The work of S.-Y. Ha is supported by the National Research Foundation of Korea(NRF-2017R1A2B2001864). The work of J. Jung is supported by the German Research Foundation (DFG) under the project number IRTG2235.}

\begin{abstract}
We present an emergent stochastic flocking dynamics of the Cucker-Smale (CS) ensemble under randomly switching topologies. The evolution of the CS ensemble with randomly switching topologies involves two random components (switching times and choices of network topologies at switching instant). First, we allow switching times for the network topology to be random so that the successive increments are i.i.d. processes following the common probability distribution. Second, at each switching instant, we  choose a network topology randomly from a finite set of admissible network topologies whose union contains a spanning tree. Even for the fixed deterministic network topology, the CS ensemble may not exhibit a mono-cluster flocking depending on the initial data and the decay mode of the communication weight functions measuring the degree of interactions between particles. 
For the flocking dynamics of the CS ensemble with these two random components, we first use a priori conditions on the network topologies and uniform boundedness of position diameter, and derive the flocking estimates via matrix theory together with a priori conditions, and then replace the a priori condition for the position diameter by some suitable condition on the system parameters and communication weight. The a priori condition on the network topology will be guaranteed by the suitable spanning tree time-blocks with probability one. 
\end{abstract}
\maketitle \centerline{\date}


\section{Introduction} \label{sec:1}
\setcounter{equation}{0}
Collective coherent movements in many-body systems are often observed in our nature, e.g. schooling of fishes, synchronous chirps of crickets, flocking of birds, herding of sheep, etc \cite{B-C, B-H, T-T, V2}. And after seminal works on the flocking modeling by Vicsek and Reynolds \cite{Rey, V}, phenomenological models were proposed to explain such aforementioned collective behaviors from diverse scientific disciplines such as applied mathematics, biology, control theory and statistical physics, etc. Among such diverse collective behaviors, we are mainly interested in the {\it flocking} phenomena, where particles adjust their velocities to the others' based only on limited environmental information or simple rules so that all particles move with the same velocity asymptotically. In this paper, we are interested in the flocking model proposed by Cucker and Smale \cite{C-S}. To be more specific, let $x_i$ and $v_i$ be the position and velocity of the $i$-th Cucker-Smale particle in $\bbr^d$, respectively. Then, the state evolution of $(x_i, v_i)$ is governed by the following Cauchy problem:
\begin{equation} \label{CS_0}
\begin{cases}
\displaystyle \dot{x}_i = v_i, \quad t > 0, \quad 1 \le i \le N,\\
\displaystyle \dot{v}_i = \frac{1}{N}\sum_{j=1}^N \phi(\|x_j - x_i\|) \left(v_j-v_i \right), \\
\displaystyle (x_i(0), v_i(0)) = (x^{in}_{i}, v^{in}_{i}),
\end{cases}
\end{equation}
where $\|\cdot\|$ denotes the standard $\ell^2$-norm in $\bbr^d$ and the communication weight $\phi: [0,\infty) \to \bbr_+$ is bounded, Lipschitz continuous and monotonically decreasing:
\begin{align}
\begin{aligned} \label{comm}
& 0 < \phi(r) \le \phi(0)=: \kappa, \quad [\phi]_{Lip} := \sup_{x \not = y} \frac{|\phi(x) - \phi(y)|}{|x-y|} < \infty, \\
&  (\phi(r)-\phi(s))(r-s) \le 0, \quad r,s\geq 0.
\end{aligned}
\end{align}
For the model \eqref{CS_0} - \eqref{comm} with the fixed deterministic network topology, there have been a lot of extensive research activities from various points of view, i.e. emergence of flocking dynamics \cite{C-S, H-L, H-T}, effects of white noises \cite{A-H, C-M, H-L-L}, time-delay effects \cite{D-H-K2, E-H-S}, application to flight formation \cite{P-E-G}, collision avoidance \cite{C-D},  generalized network structures including hierarchical leadership \cite{C-D2, D-Q, L-H, L-X}, mean-field limit \cite{B-C-C, H-L}, kinetic and hydrodynamic description \cite{C-F-R-T, F-H-T, H-T, P-S}, extension of the CS model \cite{D-H-K, H-K-R, M-T}, etc (see a recent survey \cite{C-H-L} for details). \newline

In this paper, we consider a CS flock navigating in the free space $\bbr^d$. During their evolution, the connection topology might undergo an abrupt changes due to unknown external disturbances, obstacles and internal processing mechanisms at unknown instants. In this situation, two natural questions can arise: \newline
\begin{quote}
\begin{itemize}
\item
(Q1):~How should we model the flocking dynamics of the CS model with randomly switching network topologies?

\vspace{0.2cm} 

\item
(Q2):~If the model is properly set up, then can we find some framework leading to some kind of flocking behavior in terms of system parameters and initial data?
\end{itemize}
\end{quote}
To address the above questions, we assume that the network topologies might change along a random sequence of switching times, and at each switching time, we choose a network topology from a given finite set of admissible network topologies randomly, i.e., we employ two random components such as the random switching times and random choice of network topologies. Of course, our chosen network topology may not contain a spanning tree which is necessary for emergence of flocking. Thus, we assume that the union of network topologies in the admissible set contains a spanning tree so that on a suitable time-block with finite size, the union of network topologies contains a spanning tree. Hence, each CS particle repeatedly communicates with at least one of neighboring particles during each time-block.  With this setting in mind, we consider the evolution law for the CS flocking with randomly switching topologies similar to the model \cite{C-D3}:
\begin{align}
\begin{aligned}\label{CS}
&\dot{x}_i = v_i, \quad 1 \le i \le N, \quad t>0,\\
&\dot{v}_i = \frac{1}{N}\sum_{j=1}^N \chi_{ij}^{\sigma} \phi(\|x_j - x_i \|)  \left(v_j -v_i \right),
\end{aligned}
\end{align}
where $(\chi_{ij}^{\sigma(t)})$ denotes the time-dependent network topology corresponding to the switching law $\sigma :[0,\infty) \to \{1 , \cdots, N_G\}$. Here, we have the set of admissible (directed) graphs with $N$ vertices ${\mathcal S}:=\{\mathcal G_1,\cdots,\mathcal G_{N_G} \}$. The law $\sigma$, which is piecewise constant and right-continuous, tells which network topology is used to describe the connectivity between CS particles at a certain instant. Moreover, the sequence of discontinuities $\{t_\ell\}_{\ell \in\mathbb{N}}$ would be called the sequence of {\it switching} instants (or times). For specific description, once an instant $t$ is given, then $\sigma(t) = \sigma(t_\ell) = k$ for some $1\le k \le N_G$ and $\ell\in\mathbb{N}$, and the network topology $(\chi_{ij}^{\sigma(t)})$ corresponds to the 0-1 adjacency matrix of $k$-th digraph $\mathcal{G}_k$ (see Section \ref{sec:2.2}). However, under the influence of unpredictable disturbance, the ensemble of CS particles are subject to connection failure. To explain this {\it random failure} of connectivity, authors in previous literature \cite{D-M, D-M2, H-M, R-L-X} considered discretized analogues of the CS system and $\chi_{ij}$'s in place of $\chi_{ij}^\sigma$'s, which are assumed to be nonnegative, independent and identically distributed random variables. In our case, we focus on  the continuous system and explore this randomness in connectivity by introducing randomness into the switching law $\sigma$ and the sequence of switching instants $\{t_\ell\}_{\ell\in\mathbb{N}}$. Now, the switching law $\sigma=\sigma(t,\omega)$ $(t\geq0,~\omega\in\Omega)$ becomes a $\{1,\cdots, N_G\}$-valued jump process and the sequence $\{t_\ell\}_{\ell \in\mathbb{N}}$ has also certain randomness (see Section 2.3 for detail). To describe the random switching times $\{t_\ell \}$, we instead consider the increment process $  \{\Delta_\ell := t_{\ell +1} - t_\ell \}$ and we assume that it follows some preassigned distribution $f$ on the common probability space $(\Omega, {\mathcal F}, {\mathbb P})$. On the other hand, at each switching instant, we choose the network topology ${\mathcal G}_k$ with a probability $p_k$. \newline

Next, we briefly discuss our main result on the emergence of stochastic flocking of the model \eqref{CS}. We assume that the probability density function $f$, choice probability $p_k$ and communication weight function $\phi$ satisfy 
\[ \mbox{supp}(f) \subset [a, b], \qquad \frac{ \kappa b(N-1)}{\min\limits_{1\leq k\leq N_G}\log\frac{1}{1-p_k}}<1, \qquad \frac{1}{\phi(r)}= {\mathcal O}(r^{ \varepsilon })\quad\mbox{as}\quad r\to\infty,  \]
where $\varepsilon$ is a small positive constant. Then, under the above set of assumptions, we show that any solution process $(X,V)$ to \eqref{CS} satisfies the mono-cluster flocking with probability one (see Theorem \ref{T3.1}):
\[\mathbb{P}\Big (\omega \in \Omega:~ \exists \ x^\infty>0 \mbox{ s.t } \sup_{0\le t < \infty }\D(X(t,\omega)) \le x^\infty, \mbox{ and } \lim_{t \to \infty} \D(V(t,\omega)) =0 \Big ) = 1, \]
where $\mathcal{D}(X)$ and $\mathcal{D}(V)$ denote position and velocity diameter processes: 
\[ {\mathcal D}(X) := \max_{1 \leq i, j \leq N} \|x_i - x_j\|, \quad {\mathcal D}(V) := \max_{1 \leq i, j \leq N} \|v_i - v_j\|.
\]

\vspace{0.2cm}

The rest of this paper is organized as follows. In Section 2, we review several basic concepts on directed graphs, scrambling and state transition matrices. In Section \ref{sec:3}, we present our sufficient framework and main result for the stochastic mono-cluster flocking estimate. In Section \ref{sec:4}, we first provide a priori  flocking estimates along the sample path under two a priori assumptions on the network topologies and position diameter,  and then we replace a priori condition for the position diameter by suitable conditions on the system parameters and communication weight, and the a priori condition for the network topology will be shown to hold with probability one for a suitably chosen time-block sequence. Finally, Section \ref{sec:5} is devoted to a brief summary of our main results and some remaining issues for a future work. \\

\noindent {\bf Notation:} Throughout the paper, $(\Omega, \mathcal{F}, \mathbb{P}$) denotes a generic probability space.  Matrix ordering is meant componentwise, e.g., for matrices $A=(a_{ij})_{N\times N}$ and $B=(b_{ij})_{N\times N}$,
$A\geq B$ stands for $a_{ij}\geq b_{ij}$ for all $i$, $j$. For a real number $c$, denote by $\lfloor c\rfloor$ the floor of c, i.e., the largest integer no greater than $c$. $\mathbb{N}$ denotes the set of all natural numbers (including zero). 
\section{Preliminaries}\label{sec:2}
\setcounter{equation}{0}
In this section, we first study the dissipative structure of system \eqref{CS}, and briefly review several notions on the directed graphs, scrambling matrices and state transition matrices.

\subsection{Pathwise dissipative structure} \label{sec:2.1} In this subsection, we study the dissipative structure of system \eqref{CS} with randomly switching topologies. For the symmetric network topology, the R.H.S. of $\eqref{CS}_2$ is skew-symmetric under the exchange symmetry $i \longleftrightarrow j$. Hence, the total momentum  $\sum_{i=1}^{N} v_i$ is a constant of motion. In contrast, for a digraph topology, the R.H.S. of system \eqref{CS}  may not be skew-symmetric under the exchange symmetry. This breaks up the conservation law for the total momentum.  Despite of this, we can still see that the velocity diameter is non-increasing pathwise. 
\begin{lemma} \label{L2.1}
Let $(X,V)$ be a solution process to \eqref{CS}. Then, the velocity diameter ${\mathcal D}(V)$ is non-increasing pathwise: for each $\omega \in \Omega$, 
\[
\frac{d}{dt}\D(V(t,\omega))\leq0,\quad \mbox{a.e. }t>0.
\]
\end{lemma}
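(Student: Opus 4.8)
The plan is to fix a realization $\omega \in \Omega$ and to exploit the fact that, since $\sigma(\cdot,\omega)$ is piecewise constant and right-continuous, the network topology $(\chi_{ij}^{\sigma})$ is frozen on each interval $(t_\ell, t_{\ell+1})$ between consecutive switching instants. On such an interval the right-hand side of \eqref{CS} is a bounded $C^1$ vector field, so $V$ is continuously differentiable, and the map $t \mapsto \mathcal{D}(V(t,\omega))$, being the pointwise maximum of the finitely many Lipschitz functions $\|v_i - v_j\|$, is itself Lipschitz and hence differentiable for almost every $t$. Since the switching instants form a countable (hence null) set and $V$ is continuous across them, it suffices to establish $\frac{d}{dt}\mathcal{D}(V) \le 0$ at almost every interior point of each such interval.

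First I would invoke the standard envelope (Danskin-type) argument: at almost every $t$ where $\mathcal{D}(V)$ is differentiable there is a pair of indices $(i_*, j_*)$, possibly depending on $t$, realizing the maximum $\mathcal{D}(V(t)) = \|v_{i_*}(t) - v_{j_*}(t)\|$, and the derivative of the maximum coincides with that of this active branch,
\[
\frac{d}{dt}\mathcal{D}(V) = \frac{1}{\mathcal{D}(V)}\Big\langle v_{i_*} - v_{j_*},\ \dot v_{i_*} - \dot v_{j_*}\Big\rangle,
\]
assuming $\mathcal{D}(V) > 0$ (when $\mathcal{D}(V) = 0$ all velocities agree, the right-hand side of $\eqref{CS}_2$ vanishes, and the diameter stays at its minimum). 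The heart of the argument is then an extremality property of the maximizing pair. Writing $u := v_{i_*} - v_{j_*}$, I would show by Cauchy--Schwarz that the projections of all velocities onto the direction of $u$ lie between those of $v_{j_*}$ and $v_{i_*}$, that is,
\[
\langle u,\ v_j - v_{i_*}\rangle \le 0 \quad\text{and}\quad \langle u,\ v_j - v_{j_*}\rangle \ge 0 \qquad \text{for all } 1 \le j \le N;
\]
indeed, if some velocity projected beyond $v_{i_*}$ along $u$, then $\|v_j - v_{j_*}\|$ would exceed $\mathcal{D}(V)$, contradicting maximality.

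Substituting $\eqref{CS}_2$ for $\dot v_{i_*} - \dot v_{j_*}$ into the derivative formula then yields
\[
\Big\langle u,\ \dot v_{i_*} - \dot v_{j_*}\Big\rangle = \frac{1}{N}\sum_{j=1}^N \chi_{i_*j}^{\sigma}\phi_{i_*j}\langle u, v_j - v_{i_*}\rangle - \frac{1}{N}\sum_{j=1}^N \chi_{j_*j}^{\sigma}\phi_{j_*j}\langle u, v_j - v_{j_*}\rangle,
\]
where $\phi_{ij} := \phi(\|x_i - x_j\|) > 0$ and $\chi^{\sigma}_{ij} \ge 0$. Every summand in the first sum is nonpositive and every summand in the subtracted second sum is nonnegative, so the whole expression is $\le 0$, and dividing by $\mathcal{D}(V) > 0$ gives $\frac{d}{dt}\mathcal{D}(V) \le 0$.

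I expect the main obstacle to be the rigorous justification of the differentiation-of-the-maximum step rather than the sign analysis, which is elementary. One must guarantee that at almost every $t$ the derivative of $\mathcal{D}(V)$ equals that of an active branch, uniformly in the possibly switching choice of optimal pair $(i_*, j_*)$; this is handled by Rademacher's theorem together with the observation that the set of times at which two distinct branches tie and cross has measure zero. It is worth stressing that the asymmetry of the digraph topology, which breaks the momentum conservation discussed above, is irrelevant here: the two directed sums are estimated separately, and only the nonnegativity of the individual weights $\chi^{\sigma}_{ij}\phi_{ij}$ is used.
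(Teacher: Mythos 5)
Your proof is correct and follows essentially the same route as the paper: both arguments differentiate along a pair $(i_*,j_*)$ realizing the velocity diameter and use the maximality of that pair to show each weighted projection $\chi^{\sigma}_{i_*j}\phi_{i_*j}\langle u, v_j - v_{i_*}\rangle$ (and its counterpart for $j_*$) has the favorable sign, the paper deriving the key inequalities via the polarization identity where you use a Cauchy--Schwarz/contradiction variant of the same expansion. The only real difference is that you make explicit the a.e.-differentiability and envelope (Danskin-type) justification and the degenerate case $\mathcal{D}(V)=0$, steps the paper treats more briefly, so the two proofs are the same in substance.
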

\begin{proof}
For a given $t \geq 0$ and $\omega \in \Omega$, let $i$ and $j$ be indices satisfying the relation:
\begin{equation} \label{C-0}
 \D(V(t,\omega))=\|v_i(t, \omega)-v_j(t, \omega) \|.
\end{equation} 
In the sequel, for a notational simplicity, we suppress $t$ and $\omega$ dependence in $v_i$:
\[ v_i = v_i(t, \omega). \]
Then, for such $i$ and $j$, we have
\begin{align}
\begin{aligned}\label{C-13-1}
&\frac{1}{2}\frac{d}{dt}\|v_i-v_j\|^2 =\left\langle v_i-v_j,\frac{dv_i}{dt}-\frac{dv_j}{dt}\right\rangle\\
& \hspace{0.5cm} =\left\langle v_i-v_j,\frac{1}{N}\sum_{k=1}^N \chi^{\sigma}_{ik}\phi_{ik}( v_k- v_i)\right\rangle +\left\langle v_j-v_i,\frac{1}{N}\sum_{k=1}^N \chi^{\sigma}_{jk}\phi_{jk}( v_k- v_j)\right\rangle\\
& \hspace{0.5cm} =:\mathcal{I}_{11}+\mathcal{I}_{12},
\end{aligned}
\end{align}
where $\langle \cdot, \cdot \rangle$ denotes the standard inner product in $\bbr^d$, and we wrote 
\[ \phi_{ij}:=\phi(\|x_i-x_j\|), \quad  i,j=1,2,\cdots, N \]
for convenience. Below, we estimate the terms ${\mathcal I}_{1i},~i=1,2$ one by one. \newline

\noindent $\bullet$ (Estimate of $\mathcal{I}_{11}$): For $k = 1, \cdots, N$, we use the relation \eqref{C-0} to get 
\begin{equation} \label{C-13-2}
\langle v_k-v_i,v_i-v_j\rangle =\frac{\|v_k-v_j\|^2-\|v_k-v_i\|^2-\|v_i-v_j\|^2}{2} \leq \frac{\|v_i-v_j\|^2-0-\|v_i-v_j\|^2}{2} =0.
\end{equation}
This yields
\begin{equation} \label{C-13-3}
\mathcal{I}_{11} =  \frac{1}{N}\sum_{k=1}^N\chi^{\sigma}_{ik} \phi_{ik}\left\langle v_i-v_j,v_k- v_i\right\rangle \leq   0. 
\end{equation}

\vspace{0.2cm}

\noindent $\bullet$ (Estimate of $\mathcal{I}_{12}$) : Similar to \eqref{C-13-2}, we also have
\[ 
\langle v_k-v_j,v_j-v_i\rangle =\frac{\|v_k-v_i\|^2-\|v_k-v_j\|^2-\|v_j-v_i\|^2}{2} \leq \frac{\|v_j-v_i\|^2-0-\|v_j-v_i\|^2}{2}=0. \]
This again implies
\begin{equation} \label{C-13-5}
\mathcal{I}_{12} =   \frac{1}{N}\sum_{k=1}^N\chi^{\sigma}_{jk} \phi_{jk}\left\langle v_j-v_i, v_k- v_j\right\rangle  \leq  0.
\end{equation}
In \eqref{C-13-1}, we use $\D(V)=\|v_i-v_j\|$ and combine estimates \eqref{C-13-3} and \eqref{C-13-5} to get 
\[
\D(V(t))\frac{d}{dt}\D(V(t))\leq0,\quad\mbox{a.e. }t>0.
\]
If $\D(V(t))>0$, then we can divide the above inequality by $\D(V(t))$ to obtain the desired estimate. \newline

On the other hand, if $\D(V(t))=0$ and diffrentiable at $t$, then $\D(V)$ attains a global minimum at $t$, so $\frac{d}{dt}\D(V(t))=0$. Hence we have the following differential inequality:
\[
\frac{d}{dt}\D(V(t))\leq0,\quad \mbox{a.e. }t>0.
\]
\end{proof}
\begin{remark}
Note that the result of Lemma \ref{L2.1} illustrates that the velocity diameter is non-increasing in time. Now, our job is to find some conditions leading to the zero convergence of velocity diameter. This will be done in Section \ref{sec:4}.
\end{remark}

\subsection{A directed graph} \label{sec:2.2}
In this subsection, we review jargons for network topology modeling by a directed graph (digraph). A digraph $\mathcal{G} = (\mathcal{V}({\mathcal G}), \mathcal{E}({\mathcal G}))$ consists of two sets: a set of vertices (nodes) $\mathcal{V}({\mathcal G}) = \{1, \cdots, N\}$ with $|{\mathcal G}| = N$, and a set of edges $\mathcal{E}({\mathcal G}) \subset \mathcal{V} \times \mathcal{V}$ consisting of ordered pairs of vertices:
\begin{align*}
\begin{aligned}
 (j, i) \in {\mathcal E}({\mathcal G}) \quad &\iff \quad \mbox{vertex $i$ receives an information (or signal) from the vertex $j$} \\
 &\iff \quad \mbox{$j$ is a neighbor of $i$}.
 \end{aligned}
 \end{align*}
 In this case, we define a neighbor set ${\mathcal N}_i$ of the vertex $i$:
 \[ {\mathcal N}_i :=  \{ j  \in \mathcal{V}({\mathcal G}):~ (j,i)\in \mathcal{E}({\mathcal G}) \}. \]
 If $(i,i)\in \mathcal{E}({\mathcal G})$, then we say that $\mathcal{G}$  has a self-loop at $i$. If $\mathcal{G}$ does not have a self-loop at any vertices, then $\mathcal{G}$ is said to be {\it simple}. \newline
 
 For a given digraph $\mathcal{G} = (\mathcal{V}({\mathcal G}), \mathcal{E}({\mathcal G}))$, we consider its $(0,1)$-adjacency matrix $\chi = (\chi_{ij})$:
\[\chi_{ij} := \left\{\begin{array}{ccc}
1 & \mbox{ if } & (j,i)\in \mathcal{E}({\mathcal G}),\\
0 & \mbox{ if } & (j,i)\notin \mathcal{E}({\mathcal G}).
\end{array}\right.\]
A path in $\mathcal{G}$ from $i$ to $j$ is a sequence of ordered distinct vertices  $(i_0 = i, \cdots, i_n = j)$:
\[ i= i_0 ~\longrightarrow~i_1~\longrightarrow \quad \cdots \quad \longrightarrow i_n = j  \quad \mbox{such that}~~ (i_{m-1}, i_m) \in \mathcal{E}({\mathcal G}) \quad \mbox{for every $1 \le m \le n$}. \]
If there is a path from $i$ to $j$, then we say $j$ is reachable from $i$. Moreover, a digraph $\mathcal{G}$ is said to have a spanning tree if $\mathcal{G}$ has a vertex $i$ from which any other vertices are reachable. As long as there is no confusion, we suppress ${\mathcal G}$-dependence in  $\mathcal{G} = (\mathcal{V}({\mathcal G}), \mathcal{E}({\mathcal G}))$ throughout the paper:
\[ \mathcal{V} = \mathcal{V}({\mathcal G}), \quad   \mathcal{E} = \mathcal{E}({\mathcal G}). \]
\subsection{A scrambling matrix}\label{sec:2.3}
Next, we recall the concept of scrambling matrices. First, we introduce several concepts of nonnegative matrices in the following definition.
\begin{definition} \label{D2.1}
Let $A=(a_{ij})$ be a nonnegative $N \times N$ matrix whose entries are nonnegative.
\begin{enumerate}
\item
$A$ is a {\em stochastic} matrix, if its row-sum is equal to unity:
\[ \sum_{j=1}^{N} a_{ij} = 1, \quad 1 \leq i \leq N. \]
\item
$A$ is a {\em scrambling} matrix, if for each pair of indices $i$ and $j$, there exist an index $k$ such that 
\[ a_{ik}>0 \quad \mbox{and} \quad a_{jk}>0. \]
\item
$A$ is an {\em adjacency matrix} of a digraph $\G$ if the following holds:
\[ a_{ij}>0 \quad \Longleftrightarrow \quad (j,i)\in\E. \]
In this case, we write $\G=\G(A)$.

\end{enumerate}
\end{definition}
\begin{remark}
Define the {\em ergodicity coefficient} of $A$ as follows.
\begin{equation} \label{B-1}
\mu(A):=\min_{i,j}\sum_{k=1}^N \min\{a_{ik},a_{jk}\}.
\end{equation}
Then, it is easy to see that
\begin{enumerate}
\item
$A$ is scrambling if and only if $\mu(A)>0$.
\item
For nonnegative matrices $A$ and $B$,
\begin{equation} \label{B-1-1}
 A\geq B \quad \Longrightarrow \quad \mu(A)\geq\mu(B).
\end{equation}
\end{enumerate}
\end{remark}

\vspace{0.2cm}

For a $N \times N$ matrix $A = (a_{ij})$, the Frobenius norm of $A$ is defined as follows.
\[ \| A \|_F := \sqrt{\mbox{trace}(A A^*)} = \sqrt{\mbox{trace}(A^* A)}. \]
In the following lemma, we state some properties of scrambling matrices without proofs. 
\begin{lemma}\label{L2.1}
\emph{(Lemma 2.2, \cite{D-H-K})}
Suppose that a nonnegative $N\times N$ matrix $A=(a_{ij})$ is stochastic, and let $B=(b_{i}^{j})$, $Z=(z_{i}^{j})$ and $W=(w_{i}^{j})$ be $N\times d$ matrices such that
\begin{equation*} 
 W = AZ+B.
\end{equation*}
Then, we have
\begin{equation*}
\max_{i,k}\|w_i-w_k\|\leq(1-\mu(A))\max_{l,m}\|z_l-z_m\|+\sqrt{2}\|B\|_F,
\end{equation*}
where
\[
z_i:=(z_{i}^{1},\cdots,z_{i}^{d}), \quad  b_i:=(b_{i}^{1},\cdots,b_{i}^{d}),  \quad w_i:=(w_{i}^{1},\cdots,w_{i}^{d}), \quad i=1,\cdots,N.
\]
\end{lemma}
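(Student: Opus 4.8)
The plan is to argue row-by-row and then take the maximum over index pairs only at the very end. Writing the identity $W = AZ + B$ componentwise, the $i$-th row reads $w_i = \sum_{j=1}^N a_{ij} z_j + b_i$, so for any fixed pair of indices $i,k$ one has
\[
w_i - w_k = \sum_{j=1}^N (a_{ij} - a_{kj}) z_j + (b_i - b_k).
\]
I would estimate the two summands separately. The second, involving $B$, is elementary; the first is where the scrambling structure must be exploited.

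For the first summand, the decisive structural fact is that $A$ is stochastic, so $\sum_j a_{ij} = \sum_j a_{kj} = 1$ and hence $\sum_j (a_{ij} - a_{kj}) = 0$. I would split each coefficient through the common minimum $m_j := \min\{a_{ij}, a_{kj}\}$, writing $a_{ij} - a_{kj} = (a_{ij} - m_j) - (a_{kj} - m_j)$ with both parentheses nonnegative. Setting $\alpha := \sum_j (a_{ij} - m_j)$, stochasticity forces $\sum_j (a_{kj} - m_j) = \alpha$ as well, and moreover $\alpha = 1 - \sum_j \min\{a_{ij}, a_{kj}\} \le 1 - \mu(A)$ directly from the definition \eqref{B-1} of the ergodicity coefficient. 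Assuming $\alpha > 0$ (the degenerate case $\alpha = 0$ forces $a_{ij} = a_{kj}$ for all $j$ by the equal-row-sum condition, so that the summand vanishes), the normalized weights $(a_{ij}-m_j)/\alpha$ and $(a_{kj}-m_j)/\alpha$ are each probability vectors, whence
\[
\sum_j (a_{ij} - a_{kj}) z_j = \alpha\Big(\sum_j \tfrac{a_{ij}-m_j}{\alpha}\, z_j - \sum_j \tfrac{a_{kj}-m_j}{\alpha}\, z_j\Big)
\]
is $\alpha$ times the difference of two convex combinations of $\{z_j\}$. Such a difference expands as $\sum_{j,\ell}\lambda_j \mu_\ell (z_j - z_\ell)$ with nonnegative weights summing to one, so its norm is at most $\max_{l,m}\|z_l - z_m\|$; multiplying by $\alpha \le 1 - \mu(A)$ gives the bound on the first summand.

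For the second summand I would use the triangle inequality and the arithmetic–quadratic mean estimate, $\|b_i - b_k\| \le \|b_i\| + \|b_k\| \le \sqrt{2(\|b_i\|^2 + \|b_k\|^2)}$, together with the observation that $\|B\|_F^2 = \sum_{i=1}^N \|b_i\|^2$. Since then $\|b_i\|^2 + \|b_k\|^2 \le \|B\|_F^2$, we obtain $\|b_i - b_k\| \le \sqrt{2}\,\|B\|_F$. Combining the two estimates by the triangle inequality applied to the row decomposition above, and taking the maximum over $i,k$, yields the asserted inequality.

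I expect the main obstacle to be the convex-combination argument: one must recognize that the zero-sum structure inherited from stochasticity is exactly what permits subtracting the common minimum $m_j$ and renormalizing, thereby converting an a priori awkward signed sum into a genuine difference of averages that the diameter $\max_{l,m}\|z_l-z_m\|$ controls. The bookkeeping identifying $\alpha$ with $1 - \sum_j \min\{a_{ij},a_{kj}\}$ and relating it to $\mu(A)$, along with the handling of the degenerate case $\alpha = 0$, are the only points requiring care; everything else is routine.
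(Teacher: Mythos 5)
Your proof is correct; the paper itself states this lemma without proof, deferring to Lemma 2.2 of \cite{D-H-K}, and your argument reproduces the standard one given there: your subtraction of the common minimum $m_j=\min\{a_{ij},a_{kj}\}$ is exactly the positive/negative-part decomposition of $a_{ij}-a_{kj}$ used in Hajnal-type contraction estimates, yielding $\alpha\le 1-\mu(A)$ and the difference-of-convex-combinations bound, and your $\sqrt{2}\|B\|_F$ estimate for $\|b_i-b_k\|$ is the same routine computation. All steps, including the degenerate case $\alpha=0$, check out, so nothing further is needed.
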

%

\begin{proposition}\label{P2.1}
\emph{(Theorem 5.1, \cite{Wu06})}
Let $A_i$ be nonnegative $N\times N$ matrices with positive diagonal
elements such that $\G(A_i)$ has a spanning tree for all $1\leq i\leq N-1$. Then, one has
\[ A_1A_2\ldots A_{N-1}~\mbox{is a scrambling matrix}. \]
\end{proposition}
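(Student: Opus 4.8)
The plan is to translate the algebraic conclusion that $P := A_1 A_2 \cdots A_{N-1}$ is scrambling into a purely combinatorial statement about directed reachability, and then to run a counting argument on two growing vertex sets. First I would record two elementary consequences of the hypotheses. Since every $A_t$ is nonnegative there is no cancellation, so for the partial products one has $(A_1 \cdots A_m)_{ik} > 0$ if and only if there is a chain $i = l_0, l_1, \dots, l_m = k$ with $(A_t)_{l_{t-1}, l_t} > 0$ for each $t$. Writing $S_m(i) := \{ k : (A_1 \cdots A_m)_{ik} > 0 \}$, the index convention $a_{pq} > 0 \Leftrightarrow (q,p) \in \E$ shows that $S_m(i)$ is exactly the set obtained from $\{i\}$ by successively adjoining, in $\G(A_1), \dots, \G(A_m)$, the in-neighbours of the current set. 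Because each $A_t$ has positive diagonal entries, every vertex carries a self-loop, whence $S_{m-1}(i) \subseteq S_m(i)$, so the supports grow monotonically. With these reductions, scrambling of $P$ is equivalent to the assertion that for every pair $i, j$ the final supports satisfy $S_{N-1}(i) \cap S_{N-1}(j) \neq \emptyset$, i.e. rows $i$ and $j$ of $P$ share a positive column.

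The heart of the argument is a step-by-step dichotomy applied to the pair $S_m(i), S_m(j)$ as $m$ runs from $0$ to $N-1$. Fix $i \neq j$ and let $r_t$ be a root of the spanning tree of $\G(A_t)$. In passing from level $m-1$ to level $m$ (using $\G(A_m)$), I would show that either $S_{m-1}(i) \cap S_{m-1}(j) \neq \emptyset$ already, or at least one of the two sets strictly gains a vertex. Indeed, if the two sets are still disjoint, then the single vertex $r_m$ lies outside at least one of them, say $r_m \notin S_{m-1}(i)$, and moreover $S_{m-1}(i) \subsetneq \V$ (otherwise it would meet $S_{m-1}(j)$). Since $r_m$ reaches every vertex in $\G(A_m)$, a directed path from $r_m$ into $S_{m-1}(i)$ must cross the boundary, yielding an edge $w \to u$ with $w \notin S_{m-1}(i)$ and $u \in S_{m-1}(i)$; then $w$ is an in-neighbour of a vertex of $S_{m-1}(i)$, so $w \in S_m(i) \setminus S_{m-1}(i)$. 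This boundary-crossing step is the crucial use of the spanning-tree hypothesis.

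Finally I would close with a pigeonhole count. If $S_{N-1}(i)$ and $S_{N-1}(j)$ were disjoint, then all earlier pairs would be disjoint as well (supports only grow), so the dichotomy forces $|S_m(i)| + |S_m(j)|$ to strictly increase at each of the $N-1$ steps. Starting from $|S_0(i)| + |S_0(j)| = 2$, this gives $|S_{N-1}(i)| + |S_{N-1}(j)| \geq N+1$; but both sets lie inside the $N$-vertex set $\V$, so they must overlap, a contradiction. Hence $S_{N-1}(i) \cap S_{N-1}(j) \neq \emptyset$ for all $i, j$, i.e. $P$ is scrambling. The delicate points, and where I expect the real work, are twofold: (i) the growth lemma, namely extracting the boundary edge from the spanning tree and matching it to the in-neighbour expansion dictated by the adjacency convention; and (ii) checking that exactly $N-1$ factors are needed, since the count $2 + (N-1) = N+1$ is tight, so any slip in the per-step increment or in the initialization would break the conclusion.
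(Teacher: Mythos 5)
Your proof is correct, but note that on this point there is nothing in the paper to compare against: Proposition 2.1 is quoted without proof from Theorem 5.1 of \cite{Wu06}, so your blind reconstruction supplies an argument the paper omits (and it is essentially the standard reachability proof of Wu's result). Checking it against the paper's conventions, every step goes through. The dictionary is right: nonnegativity rules out cancellation, so $(A_1\cdots A_m)_{ik}>0$ iff a chain of positive entries exists, and since the paper's adjacency convention is $a_{pq}>0 \Leftrightarrow (q,p)\in\E$, your set $S_m(i)$ is indeed obtained from $S_{m-1}(i)$ by adjoining in-neighbours in $\G(A_m)$; the positive diagonals give self-loops, hence the monotonicity $S_{m-1}(i)\subseteq S_m(i)$, and scrambling of the product is by definition the statement $S_{N-1}(i)\cap S_{N-1}(j)\neq\emptyset$ for every pair. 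The growth lemma — the crux — is sound: if $S_{m-1}(i)$ and $S_{m-1}(j)$ are disjoint, the root $r_m$ of $\G(A_m)$ misses at least one of them, say $S_{m-1}(i)$, which is nonempty (it contains $i$) and proper; the first entry point of a directed path from $r_m$ into it gives an edge $w\to u$ with $u\in S_{m-1}(i)$ and $w\notin S_{m-1}(i)$, and since $(w,u)\in\E_m$ means $(A_m)_{uw}>0$, indeed $w\in S_m(i)\setminus S_{m-1}(i)$. Your closing count is also handled carefully at its one delicate point: the per-step increment requires disjointness at \emph{every} intermediate stage, and you correctly derive this from monotonicity under the contradiction hypothesis, so $|S_{N-1}(i)|+|S_{N-1}(j)|\geq 2+(N-1)=N+1>N=|\V|$ forces the two final supports to meet. (The pair $i=j$ is trivial since positive diagonals give $P_{ii}>0$, and your observation that the bound $N-1$ is tight is accurate: a directed path on $N$ vertices with self-loops, used at every step, keeps $S_m$ of the source equal to a singleton while the sink's support gains only one vertex per step.)
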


\subsection{A state transition matrix}\label{sec:2.4}
In this subsection, we discuss the notion and properties of state transition matrices. Let $t_0\in\bbr$ and $A:[t_0,\infty)\to\bbr^{N\times N}$ be an $N\times N$ matrix of piecewise continuous function.  \newline

Consider the following Cauchy problem for the time-dependent linear ODE:
\begin{align}
\begin{aligned} \label{B-4}
& \frac{d\xi(t)}{dt} =A(t)\xi(t),\quad t > t_0, \\
& \xi |_{t = t_0} = \xi(t_0).
\end{aligned}
\end{align}
Then, the solution of \eqref{B-4} is given by
\begin{equation*}
\xi(t)=\Phi(t,t_0)\xi(t_0), \quad t \geq t_0,
\end{equation*}
where $\Phi(t,t_0)$ is called the state transition matrix or the fundamental matrix for \eqref{B-4}. \newline

Note that we can write the state transition matrix $\Phi(t,t_0)$ corresponding to system \eqref{B-4} as the Peano-Baker series (see \cite{Sontag}):
\[
\Phi(t,t_0)=I+\sum_{n=1}^\infty \int_{t_0}^t\int_{t_0}^{\tau_1}\cdots\int_{t_0}^{\tau_{n-1}}A(\tau_1)A(\tau_{2})\cdots A(\tau_n)d\tau_n\cdots d\tau_2d\tau_1,
\]
where $I$ is the  $N \times N$ identity matrix. \newline

Let $t_0\in\bbr$, $c\in\bbr$ and $A:[t_0,\infty)\to\bbr^{N\times N}$ be an $N\times N$ matrix of continuous functions. Then, for such time-dependent matrix $A$, we set $\Phi(t,t_0)$ and $\Psi(t,t_0)$ to be the state transition matrices corresponding to the following linear ODEs, respectively:
\begin{equation*} 
\frac{d\xi(t)}{dt} =A(t)\xi(t) \quad \mbox{and} \quad
\frac{d\xi(t)}{dt} =[A(t)+cI]\xi(t),\quad t > t_0.
\end{equation*}
In the next lemma, we study a relation between $\Phi(t,t_0)$ and $\Psi(t,t_0)$ to be used in Lemma \ref{L4.1}.
\begin{lemma}\label{L2.2}
\emph{\cite{D-H-K}}
The following relation holds.
\[ \Phi(t,t_0)=e^{-c(t-t_0)}\Psi(t,t_0), \quad \mbox{or} \quad \Psi(t,t_0) =e^{c(t-t_0)}  \Phi(t,t_0), \quad t \geq t_0. \]
\end{lemma}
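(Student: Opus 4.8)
The plan is to reduce the claimed identity to the uniqueness of solutions for the matrix initial value problem that \emph{defines} the state transition matrix. By construction, $\Phi(\cdot,t_0)$ and $\Psi(\cdot,t_0)$ are the unique $N\times N$ matrix solutions of
\[
\frac{d}{dt}\Phi(t,t_0)=A(t)\Phi(t,t_0),\qquad \frac{d}{dt}\Psi(t,t_0)=[A(t)+cI]\Psi(t,t_0),\qquad t>t_0,
\]
both subject to the common initial value $\Phi(t_0,t_0)=\Psi(t_0,t_0)=I$. Since $A$ is assumed continuous on $[t_0,\infty)$, each of these linear matrix ODEs has a unique solution, so it suffices to produce a single candidate that satisfies the first problem and then appeal to uniqueness. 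It is enough to prove the first identity, as the second is merely its rearrangement.

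First I would introduce the candidate $\tilde\Phi(t,t_0):=e^{-c(t-t_0)}\Psi(t,t_0)$ and differentiate it by the product rule, substituting the defining ODE for $\Psi$:
\begin{align*}
\frac{d}{dt}\tilde\Phi(t,t_0)
&= -c\,e^{-c(t-t_0)}\Psi(t,t_0)+e^{-c(t-t_0)}\frac{d}{dt}\Psi(t,t_0)\\
&= -c\,e^{-c(t-t_0)}\Psi(t,t_0)+e^{-c(t-t_0)}[A(t)+cI]\Psi(t,t_0)\\
&= e^{-c(t-t_0)}A(t)\Psi(t,t_0)=A(t)\tilde\Phi(t,t_0).
\end{align*}
The key point making the middle line collapse is that the $-c$ and $+cI$ contributions cancel, and that the scalar factor $e^{-c(t-t_0)}$ commutes with every matrix, so it can be pulled across $A(t)$ in the last step. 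Together with the initial value $\tilde\Phi(t_0,t_0)=e^{0}\,\Psi(t_0,t_0)=I$, this shows that $\tilde\Phi$ solves exactly the same initial value problem as $\Phi$. Uniqueness then forces $\Phi(t,t_0)=\tilde\Phi(t,t_0)=e^{-c(t-t_0)}\Psi(t,t_0)$, as desired.

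Honestly, there is no substantial obstacle here: the argument is a routine scalar-rescaling computation, and the only points requiring a word of care are that the scalar exponential commutes with the (possibly noncommuting) matrices involved, and that the uniqueness theorem for linear matrix ODEs genuinely applies, which is guaranteed by the continuity of $A$. As an alternative I could argue at the level of solution trajectories: if $\xi$ solves $\dot\xi=A(t)\xi$ then $\eta(t):=e^{c(t-t_0)}\xi(t)$ solves $\dot\eta=[A(t)+cI]\eta$ with $\eta(t_0)=\xi(t_0)$, and comparing $\eta(t)=\Psi(t,t_0)\xi(t_0)$ with $e^{c(t-t_0)}\Phi(t,t_0)\xi(t_0)$ over arbitrary initial data yields $\Psi(t,t_0)=e^{c(t-t_0)}\Phi(t,t_0)$. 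One could also verify the identity termwise from the Peano--Baker series by expanding the products of $A(\tau_k)+cI$, but this is more cumbersome and offers no advantage over the uniqueness argument above.
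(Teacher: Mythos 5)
Your proof is correct and complete: the rescaling candidate $e^{-c(t-t_0)}\Psi(t,t_0)$ satisfies the defining initial value problem for $\Phi$, and uniqueness (valid since $A$ is continuous in the setting of the lemma) finishes the argument. The paper itself gives no proof here but defers to Lemma 2.3 of the cited reference \cite{D-H-K}, where essentially this same standard scalar-rescaling/uniqueness computation is carried out, so your argument matches the intended one and is, if anything, more self-contained.
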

\begin{proof}
The proof can be found in Lemma 2.3 of \cite{D-H-K}.
\end{proof}

%
%
%
%
%

\section{A description of main result} \label{sec:3}
\setcounter{equation}{0}
In this section, we present a  framework and main result for the emergence of stochastic flocking to the CS model with randomly switching topologies.
\subsection{Standing assumptions} \label{sec:3.1}
Let $\{t_\ell\}_{\ell\in \mathbb{N}}$ be an increasing sequence of ``random switching times" such that the increment sequence  $\{t_{\ell+1}-t_\ell \}_{\ell\in\mathbb{N}}$ is a sequence of i.i.d. positive random variables on the common probability space $(\Omega, \mathcal{F}, \mathbb{P}$) with probability density function $f$. We also assume that the switching law $\{\sigma _t\}_{t\geq0}$ satisfies the following conditions:
\begin{itemize}
\item For each $\ell \geq 0$ and $\omega \in \Omega$, $\sigma_t(\omega) = \sigma(t,\omega)$ is constant on the interval $t\in[t_\ell(\omega),t_{\ell+1}(\omega))$.
\item $\{\sigma_{t_{\ell}} \}_{\ell\ge 0}$ is a sequence of i.i.d. random variables such that for any $\ell \geq 0$,
\[
\mathbb P(\sigma_{t_\ell}= k)=p_k,\quad \mbox{ for each } k=1,\cdots, N_G,
\]
where $p_1,\cdots,p_{N_G} >0$ are given positive constants satisfying $p_1+\cdots+p_{N_G} =1$.
\end{itemize}
For each $k=1, \cdots, N_G$, let $\mathcal G_k=(\mathcal V, \mathcal E_k)$ be the $k$-th admissible digraph, and for each $t\geq0$ and $\omega\in\Omega$, the time-dependent network topology $(\chi_{ij}^\sigma)=(\chi_{ij}^{\sigma_t(\omega)})$ is determined by

\[\chi_{ij}^{\sigma_t(\omega)} := \left\{\begin{array}{ccc}
1 & \mbox{ if } & (j,i)\in\mathcal E_{\sigma_t(\omega)},\\
0 & \mbox{ if } & (j,i)\notin \mathcal E_{\sigma_t(\omega)}.
\end{array}\right.\]
\vspace{0.1cm}

\noindent For technical reasons and without loss of generality, we assume that each $\mathcal{G}_k$ has a self-loop at each vertex. For later use, we define the union graph of $\mathcal{G}_{\sigma_t(\omega)}$ for $t \in [s_0, s_1)$ and $\omega\in\Omega$ as
\[\mathcal{G}([s_0, s_1))(\omega) := \bigcup_{t \in [s_0, s_1)}\mathcal{G}^{\sigma_t(\omega)} =\left(\mathcal{V}, \bigcup_{t \in [s_0, s_1)}\mathcal{E}_{\sigma_t(\omega)}\right).\\ \]

Note that the network topology might not actually `switch' at the (possibly) switching instants. In other words, it might happen that $\sigma_{t_{\ell+1}}(\omega)=\sigma_{t_{\ell}}(\omega)$ for some $\ell\geq0$ and $\omega \in \Omega$. Now, we are ready to provide a framework for stochastic flocking to the random dynamical system \eqref{CS}.  \newline

For a set of admissible digraphs and the probability density fuction $f$ of increments of switching times, we impose the following assumption $(\mathcal A)$ as our standing assumption throughout the paper.
\begin{itemize}
\item $(\mathcal A1)$: The union digraph of all available network topologies in the set ${\mathcal S}$ has a spanning tree:
\[
\bigcup_{1\leq k\leq n}\mathcal{G}_k :=\left(\mathcal{V}, \bigcup_{1\leq k\leq n}\mathcal{E}_{k}\right)\quad\mbox{has a spanning tree.}
\]
\item $(\mathcal A2)$: $f$ is supported on some bounded interval with a positive lower bound, say
\[
\mbox{supp}f\subset[a,b]\subset(0,\infty).
\]
\end{itemize}

\subsection{Main result} \label{sec:3.2} Below, we first briefly sketch our proof strategy and then present our main result. Basically, we will use matrix theory discussed in the previous section as key tools for the flocking estimate along sample paths. More precisely, we delineate our proof strategy in four steps. \newline

\begin{itemize}
\item
Step A~(Matrix formulation): In order to use matrix theory,  we rewrite the momentum equation $\eqref{CS}_2$ as a matrix form:
\[  \frac{d}{dt}V(t)=-\frac{1}{N}  L_{\sigma_t }(t)V(t),   \]
where $L_{\sigma_t }(t)$ is the Laplacian matrix to be defined in \eqref{D-1-1} - \eqref{D-1-2}.

\vspace{0.2cm}

\item
Step B~(A priori velocity alignment estimate along a sample path):~For each sample point $\omega \in \Omega$, we introduce a priori conditions:
\begin{enumerate}
\item
(${\mathcal P}$1):~there exist $n\in\bbn$ and  $c>0$  such that $\kappa b(N-1)c<1$, and the subsequence $\{t_\ell^*\}_{\ell\in\bbn}\subset\{t_\ell\}_{\ell\in\bbn}$ defined by $t_\ell^*:= t_{a_\ell(n,c)}$ satisfies
\[
 \mathcal G([t_\ell^*,t_{\ell+1}^*))(\omega)~\mbox{has a spanning tree for all $\ell\geq0$}, 
\]
where the explicit construction of $a_\ell(n,c)$ will be given in \eqref{D-3-1}.

\vspace{0.2cm}

\item
(${\mathcal P}$2):~the position diameter is uniformly bounded pathwise:
\[
\sup_{0\leq t<\infty}\D(X(t,\omega))\leq x^{\infty} < \infty.
\]
Note that the constant $x^{\infty}$ can be chosen independent of $\omega$ in this step. 
\end{enumerate}
Under the above two a priori assumptions, we show that the velocity alignment estimate can emerge (Proposition \ref{P4.1}):
\[ \lim_{t \to \infty} \D(V(t, \omega)) = 0. \]

\vspace{0.2cm}

\item
Step C (Flocking along a sample path):~We replace the a priori assumption (${\mathcal P}2)$ by a suitable condition on the system parameters and communication weight, and derive flocking estimates along sample path: for each $\omega \in \Omega$, 
\[ \sup_{0\leq t<\infty}\D(X(t,\omega))\leq x^{\infty} < \infty, \quad  \lim_{t \to \infty} \D(V(t, \omega)) = 0. \]

\vspace{0.2cm}

\item
Step D  (Stochastic flocking):~We look for a suitable condition for the choice probability $p_k$ for the network selection, and construct a suitable time-block guaranteeing an existence of spanning tree in each time-block, and then under these well-prepared setting, the a priori assumption  (${\mathcal P}1)$ can be attained with probability one.
\end{itemize}

\vspace{0.2cm}

We perform the above outlined strategy one by one to derive our main result on the flocking estimate of \eqref{CS} .
\begin{theorem}\label{T3.1}
Suppose that the framework $(\mathcal A1)-(\mathcal A2)$ holds, and system parameters $b$, $N$, $p_k$'s and communication weight $\phi$ satisfy the following conditions:
\[  \frac{ \kappa b(N-1)}{\min\limits_{1\leq k\leq N_G}\log\frac{1}{1-p_k}}<1 \qquad \mbox{and} \qquad \frac{1}{\phi(|x|)}= {\mathcal O}(|x|^{ \varepsilon })\quad\mbox{as}\quad |x| \to\infty,  \]
where $\varepsilon$ is a positive constant satisfying the following relation:
\[  0 \leq\varepsilon<\frac{1}{N-1}-\frac{\kappa b }{\min\limits_{1\leq k\leq N_G}\log\frac{1}{1-p_k}}. \]
 Then, for any solution process $(X,V)$ to \eqref{CS}, the asymptotic flocking emerges with probability one:
\[\mathbb{P}\Big( \omega \in \Omega:~ \exists \ x^\infty>0 \mbox{ s.t } \sup_{0\le t < \infty }\D(X(t,\omega)) \le x^\infty, \mbox{ and } \lim_{t \to \infty} \D(V(t,\omega)) =0 \Big) = 1. \]
\end{theorem}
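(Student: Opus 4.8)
The plan is to execute the four-step program outlined before the statement, converting the momentum equation into a linear matrix flow and then extracting geometric contraction of the velocity diameter across suitably chosen time-blocks. First I would write $\eqref{CS}_2$ as $\dot V(t) = -\frac1N L_{\sigma_t}(t) V(t)$ and represent the solution through the state transition matrix, $V(t) = \Phi(t,s)V(s)$. The key structural observation is that $\Phi$ is \emph{row-stochastic}: since each Laplacian $L_{\sigma_t}$ has zero row sums, the shifted generator $-\frac1N L_{\sigma_t}+cI$ is (for $c$ large) entrywise nonnegative with constant row sum $c$, so its transition matrix $\Psi$ has row sums $e^{c(t-s)}$, and by Lemma \ref{L2.2} one gets $\Phi = e^{-c(t-s)}\Psi$, which is nonnegative with unit row sums and positive diagonal. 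On a block $[t_\ell^*,t_{\ell+1}^*)$ whose union graph carries a spanning tree (exactly $(\mathcal P1)$), I would factor $\Phi(t_{\ell+1}^*,t_\ell^*)$ into transition matrices over the constant-topology subintervals and invoke Proposition \ref{P2.1} to conclude the block matrix is \emph{scrambling}; the a priori bound $\kappa b(N-1)c<1$ is what keeps the relevant factors positive on their diagonals so that Proposition \ref{P2.1} applies. Feeding $W=\Phi(t_{\ell+1}^*,t_\ell^*)Z$ (with $B=0$) into the scrambling estimate (Lemma \ref{L2.1}) yields the one-block contraction
\[ \D(V(t_{\ell+1}^*)) \le \bigl(1-\mu_\ell\bigr)\,\D(V(t_\ell^*)), \qquad \mu_\ell := \mu\bigl(\Phi(t_{\ell+1}^*,t_\ell^*)\bigr) > 0. \]
Under $(\mathcal P2)$ the weight is bounded below by $\phi(x^\infty)>0$, which together with $(\mathcal A2)$ bounding block durations gives a uniform $\mu_\ell \ge \mu_*>0$; combined with the pathwise monotonicity of $\D(V)$ from Lemma \ref{L2.1}, this forces $\D(V(t,\omega))\to 0$ and establishes Proposition \ref{P4.1}.

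In Step C I would drop $(\mathcal P2)$ and close a self-consistency argument instead. From $\frac{d}{dt}\D(X)\le \D(V)$ and the geometric decay just obtained, $\sup_t \D(X(t)) \le \D(X(0)) + \int_0^\infty \D(V(s))\,ds \le \D(X(0)) + \frac{C}{\mu_*(x^\infty)}\D(V(0))$, where the monotone dependence $\mu_*(x^\infty) \gtrsim \phi(x^\infty)^{N-1}$ enters through the $N-1$ factors in the scrambling product of Proposition \ref{P2.1}. Imposing the ansatz $\D(X)\le x^\infty$ turns this into the fixed-point inequality $x^\infty \ge \D(X(0)) + C'\,(x^\infty)^{\varepsilon(N-1)}$ once the growth hypothesis $1/\phi(|x|)=\mathcal O(|x|^\varepsilon)$ is used. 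A finite $x^\infty$ exists precisely when the exponent stays below one, and a careful accounting of the (random) block length, which scales like $1/\min_k\log\frac1{1-p_k}$, sharpens this to the stated threshold $\varepsilon(N-1)+\frac{\kappa b(N-1)}{\min_k\log\frac1{1-p_k}}<1$. This produces pathwise flocking on every sample path for which $(\mathcal P1)$ holds.

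Step D builds the block sequence $\{t_\ell^*\}=\{t_{a_\ell(n,c)}\}$ and shows that each block's union graph contains a spanning tree with probability one. Because the topology at each switching instant is drawn i.i.d. with $\bbp(\sigma_{t_\ell}=k)=p_k>0$ and $(\mathcal A1)$ guarantees that the union of \emph{all} admissible graphs has a spanning tree, a block containing enough switching intervals will, with overwhelming probability, exhibit every $\mathcal G_k$ and hence a spanning tree; the failure probability over $m$ intervals is controlled by $\sum_k(1-p_k)^m=\sum_k e^{-m\log\frac1{1-p_k}}$. A Borel--Cantelli argument over the i.i.d. block structure then secures $(\mathcal P1)$ on a full-measure set, and the geometric tail is exactly where the quantity $\min_k\log\frac1{1-p_k}$, and hence the parameter condition of Theorem \ref{T3.1}, originates. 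Intersecting this event with the pathwise conclusion of Step C completes the proof.

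The genuinely delicate point is the coupling in Step C: the contraction rate $\mu_*$ degrades as $\D(X)$ grows, since long-range communication weakens like $|x|^{-\varepsilon}$, yet $\D(X)$ can be controlled only through the very velocity decay that $\mu_*$ drives. Closing this loop with a finite $x^\infty$ while simultaneously absorbing the randomness of the block lengths produced in Step D is where the exponent budget $\varepsilon(N-1)+\kappa b(N-1)/\min_k\log\frac1{1-p_k}<1$ is consumed, and I expect this quantitative bookkeeping, rather than any soft argument, to be the main obstacle.
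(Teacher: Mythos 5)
Your outline reproduces Steps A and B of the paper faithfully at the structural level, but it contains a genuine quantitative error that breaks Steps B--D as you have coupled them: you cannot have both fixed-size blocks with a uniform scrambling bound $\mu_\ell \ge \mu_* > 0$ and an almost-sure spanning-tree property. If the blocks $[t^*_\ell, t^*_{\ell+1})$ contained a constant number $n$ of switching intervals, each block would independently fail to contain a spanning tree with probability bounded below by a fixed $q>0$, and the second Borel--Cantelli lemma would force infinitely many bad blocks almost surely, so $(\mathcal P1)$ would hold with probability \emph{zero}. This is precisely why the paper's blocks grow, $a_{\ell+1}(n,c)-a_\ell(n,c)=n+\lfloor c\log(\ell+1)\rfloor$ in \eqref{D-3-1}: the failure probabilities $\sum_k(1-p_k)^{n+\lfloor c\log(\ell+1)\rfloor}$ become summable when $c\log\frac{1}{1-p_k}>1$. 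The price is that the ergodicity coefficient in Lemma \ref{L4.1} carries the factor $e^{-\kappa(t^*_{r(N-1)}-t^*_{(r-1)(N-1)})}\sim r^{-\kappa b(N-1)c}$, which tends to zero; so $\mu_\ell$ is \emph{not} uniformly bounded below, the decay of $\D(V)$ is only the stretched exponential $\exp\bigl(-C\,r^{1-\kappa b(N-1)c}\bigr)$ of Proposition \ref{P4.1}, and your claimed geometric decay never occurs. Relatedly, you misattribute the role of $\kappa b(N-1)c<1$: positivity of the diagonals needed for Proposition \ref{P2.1} comes for free from the self-loops via the Peano--Baker lower bound $\Phi \ge e^{-\kappa\Delta t}(I+\frac{a}{N}\underline{A})$, whereas $\kappa b(N-1)c<1$ is what makes $\sum_i i^{-\kappa b(N-1)c}$ diverge so the accumulated contraction still forces $\D(V)\to 0$ despite the degrading blocks. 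Consequently your Step C bookkeeping $\int_0^\infty \D(V)\,dt \le C\,\D(V(0))/\mu_*$ and the fixed-point inequality built on it are unavailable; the paper instead must introduce the auxiliary exponent $\delta$ with $\frac{1}{1-\kappa b(N-1)c}<\delta<\frac{1}{(N-1)\varepsilon}$ and use Lemma \ref{L4.2} to convert the stretched-exponential decay into a polynomially decaying series that remains summable against the logarithmically growing block lengths (condition \eqref{C-100}, Proposition \ref{P4.2}, Corollary \ref{C4.1}); that is exactly where the window $0\le\varepsilon<\frac{1}{N-1}-\frac{\kappa b}{\min_k\log\frac{1}{1-p_k}}$ is consumed.

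There is a second, smaller gap in your Step D. Borel--Cantelli yields only that all but finitely many blocks contain a spanning tree almost surely, which is weaker than $(\mathcal P1)$ as used in Propositions \ref{P4.1} and \ref{P4.2} (every block, from $\ell=0$); to use it you would have to restart the estimates at a random index with an a priori uncontrolled position diameter at the restart time --- salvageable in principle since $x^\infty$ may depend on $\omega$, but requiring a reworking you do not supply. The paper takes a different route (Proposition \ref{P4.3}): using independence of the switching draws in disjoint blocks it bounds $\bbp(\mbox{all blocks good})\ge p(n)=\exp\bigl(-(2\log 2)\sum_k(1-p_k)^n\sum_\ell(1-p_k)^{\lfloor c\log(\ell+1)\rfloor}\bigr)$, obtains flocking on that event with probability at least $p(n)$, and then lets $n\to\infty$ so that $p(n)\to 1$, the flocking event itself being independent of $n$. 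Your union bound $\sum_k(1-p_k)^m$ for a single block is the same as the paper's, so that ingredient is sound; the missing idea is the $p(n)\to 1$ device (or, alternatively, the restart argument) to pass from ``overwhelmingly likely per block'' to the probability-one statement of Theorem \ref{T3.1}.
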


\section{Emergent behavior of the randomly switching system}\label{sec:4}
\setcounter{equation}{0}
In this section, we present a proof for Theorem \ref{T3.1} following the outline depicted in Section \ref{sec:3.2}.

%
%
%
%

\subsection{A matrix formulation} \label{sec:4.1} In this subsection, we first reformulate the momentum equations in $\eqref{CS}_2$ so  that we can use tools from matrix theory documented in Section \ref{sec:2}.  \newline

Consider the momentum equations:
\begin{equation} \label{D-0}
 \dot{v}_i = \frac{1}{N}\sum_{j=1}^N \chi_{ij}^{\sigma} \phi(\|x_j- x_i \|)  \left(v_j -v_i \right), \quad 1 \leq i \leq N.
\end{equation} 
We rearrange the terms in  $\eqref{D-0}$ as follows.
\begin{equation} \label{D-1}
{\dot v}_i = -\frac{1}{N} \Big[ \Big( \sum_{j=1}^N \chi_{ij}^{\sigma}\phi(\|x_i-x_j\|)  \Big) v_i -\sum_{j=1}^N \chi_{ij}^{\sigma}\phi(\|x_i-x_j\|) v_j  \Big].
\end{equation}
For the matrix formulation of \eqref{D-1}, we introduce  $N\times N$ Laplacian matrices $ {L_k}(t)$ $(k=1,\cdots, N_G)$ as follows:
\begin{equation} \label{D-1-1}
  L_{k }(t):=  D_{k }(t)-  A_{k }(t),
\end{equation}
where $A_{k }(t)=\left(  a_{ij}^{k}(t)\right)$ and $D_{k }(t)=\mbox{diag}\left(  d_1^{k}(t),\cdots, d_N^{k }(t)\right)$ are written as
\begin{equation} \label{D-1-2}
  a_{ij}^{k }(t):=\chi_{ij}^{k}\phi(\|x_i(t)-x_j(t)\|)\quad\mbox{and}\quad   d_i^{k }(t)=\sum_{j=1}^N \chi_{ij}^{k}\phi(\|x_i(t)-x_j(t)\|).
\end{equation}
Thus, system $\eqref{D-1}$ can be rewritten as 
\begin{equation}\label{D-2}
\frac{d}{dt}V(t)=-\frac{1}{N}  L_{\sigma_t }(t)V(t).
\end{equation}
Let $\Phi(t_2,t_1)$ be the state transition matrix associated with \eqref{D-2} on the interval $[t_1, t_2]$. Then we have the representation formula for $V$:
\begin{equation} \label{D-3}
 {V}(t_2) = \Phi (t_2, t_1 )V(t_1), \quad  t_2\geq t_1\geq0.
\end{equation}

\subsection{Pathwise flocking under a priori assumptions} \label{sec:4.2} In this subsection, we study the emergence of stochastic flocking estimate under a priori assumption on the uniform bound for position diameter. From now on, we present {\it a priori} flocking estimates for each fixed sample $\omega \in \Omega$. In the sequel, as long as there is no confusion, we frequently suppress the $\omega$-dependence of solution processes or parameters for convenience. \newline

\noindent {\bf A priori assumptions}: For each positive integer $n$ and positive real number $c>0$, we define an increasing sequence $\{a_\ell(n,c)\}_{\ell\in\mathbb{N}}$ of integers by the following recurrence relation:
\begin{equation} \label{D-3-1}
a_0(n,c)=0,\qquad a_{\ell+1}(n,c)=a_\ell(n,c)+n+\lfloor c\log (\ell+1)\rfloor, \qquad (\ell\in\mathbb{N}).
\end{equation}
Let $\omega\in\Omega$ be fixed, and let $(X,V)$ be a solution process to \eqref{CS}. Then, our two a priori assumptions are as follows: \newline
\begin{itemize}
\item
(${\mathcal P}$1):~there exist $n\in\bbn$, $n>0$ and  $c>0$  such that $\kappa b(N-1)c<1$, and the subsequence $\{t_\ell^*\}_{\ell\in\bbn}\subset\{t_\ell\}_{\ell\in\bbn}$ defined by $t_\ell^*:= t_{a_\ell(n,c)}$ in \eqref{D-3-1} satisfies
\begin{equation*} \label{D-4}
 \mathcal G([t_\ell^*,t_{\ell+1}^*))(\omega)~\mbox{has a spanning tree for all $\ell\geq0$}, 
\end{equation*} 

\vspace{0.5cm}

\item
(${\mathcal P}$2):~the position diameter is uniformly bounded in time:
\begin{equation*} \label{D-5}
\sup_{0\leq t<\infty}\D(X(t,\omega))\leq x^{\infty} < \infty.
\end{equation*}
\end{itemize}

\vspace{0.5cm}

\begin{lemma} \label{L4.1}
Suppose that the a priori assumptions (${\mathcal P}$1) and (${\mathcal P}$2) hold. Then, the transition matrix $ \Phi(t^*_{r(N-1)}, t^*_{(r-1)(N-1)})$ is stochastic and its ergodicity coefficient satisfies
\begin{equation} \label{D-6}
  \mu\left(\Phi(t^*_{r(N-1)}, t^*_{(r-1)(N-1)})\right)\geq  e^{-\kappa (t^*_{r(N-1)}- t^*_{(r-1)(N-1)})} \left(  \frac{a}{N}\right)^{N-1}
\phi(x^\infty)^{N-1}.
 \end{equation}
\end{lemma}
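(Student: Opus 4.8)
The plan is to establish the two claims — stochasticity and the ergodicity bound — in turn, using the correspondence between $\Phi$ and the transition matrix $\Psi$ of a shifted, entrywise nonnegative generator supplied by Lemma \ref{L2.2}, together with the combinatorial input of Proposition \ref{P2.1}.

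First I would settle stochasticity. Writing the generator as $-\frac1N L_{\sigma_t}(t)$, observe that its row sums vanish, since \eqref{D-1-1} gives $L_{\sigma_t}(t)\mathbf{1}=(D_{\sigma_t}-A_{\sigma_t})(t)\mathbf{1}=0$; hence $y(t):=\Phi(t,t_0)\mathbf{1}$ solves $\dot y=-\frac1N L_{\sigma_t}y$ with $y(t_0)=\mathbf{1}$, and by uniqueness $y\equiv\mathbf{1}$, i.e.\ every row sum of $\Phi$ equals $1$. For nonnegativity I would apply Lemma \ref{L2.2} with $c=\kappa$: the shifted generator $B(t):=-\frac1N L_{\sigma_t}(t)+\kappa I$ has off-diagonal entries $\frac1N a^{\sigma_t}_{ij}(t)\ge0$ and diagonal entries $\kappa-\frac1N\sum_{j\ne i}a^{\sigma_t}_{ij}(t)\ge\kappa-\frac{(N-1)\kappa}{N}=\frac{\kappa}{N}>0$, so $B(t)\ge0$ entrywise. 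The Peano--Baker series then exhibits the associated $\Psi$ as a sum of nonnegative matrices, so $\Psi\ge0$ and $\Phi=e^{-\kappa(t-t_0)}\Psi\ge0$. Combined with the unit row sums, this shows $\Phi$ is stochastic.

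For the ergodicity bound I would decompose the transition matrix across the $N-1$ spanning-tree blocks built from \eqref{D-3-1}: with $\ell_0:=(r-1)(N-1)$,
\[ \Phi\big(t^*_{r(N-1)},t^*_{\ell_0}\big)=\prod_{m=1}^{N-1}\Phi\big(t^*_{\ell_0+m},t^*_{\ell_0+m-1}\big)=e^{-\kappa(t^*_{r(N-1)}-t^*_{\ell_0})}\prod_{m=1}^{N-1}\Psi\big(t^*_{\ell_0+m},t^*_{\ell_0+m-1}\big), \]
the scalar exponentials accumulating to the exponent in \eqref{D-6}. On each block $[t^*_\ell,t^*_{\ell+1})$ I would use the first-order Peano--Baker lower bound $\Psi\ge I+\int B\,d\tau$, the dwell-time bound $t_{j+1}-t_j\ge a$ from $(\mathcal A2)$, and the diameter bound $\D(X)\le x^\infty$ from $(\mathcal P2)$ (whence $\phi(\|x_i-x_j\|)\ge\phi(x^\infty)$ by monotonicity): each diagonal entry is $\ge1$, while each entry indexed by an edge of the union graph $\G([t^*_\ell,t^*_{\ell+1}))$ already appears at first order with value $\ge\frac{a}{N}\phi(x^\infty)=:\beta$. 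In the relevant regime $\beta\le1$ this yields $\Psi(t^*_{\ell+1},t^*_\ell)\ge\beta\,G_\ell$, where $G_\ell$ is the $\{0,1\}$-matrix encoding the self-loops together with a spanning tree of that union graph.

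It then remains to quantify the scrambling of the product. Each $G_\ell$ is nonnegative with positive diagonal and $\G(G_\ell)$ has a spanning tree by $(\mathcal P1)$, so Proposition \ref{P2.1} gives that $\prod_{m=1}^{N-1}G_{\ell_0+m-1}$ is scrambling; being a nonnegative integer matrix, each of its scrambling entries is $\ge1$, so $\mu\big(\prod G\big)\ge1$ by \eqref{B-1}. Using order-preservation of products of nonnegative matrices, monotonicity \eqref{B-1-1}, and homogeneity $\mu(cM)=c\,\mu(M)$, I would conclude
\[ \mu\Big(\Phi\big(t^*_{r(N-1)},t^*_{\ell_0}\big)\Big)\ge e^{-\kappa(t^*_{r(N-1)}-t^*_{\ell_0})}\beta^{N-1}\,\mu\Big(\textstyle\prod_m G\Big)\ge e^{-\kappa(t^*_{r(N-1)}-t^*_{\ell_0})}\Big(\tfrac{a}{N}\Big)^{N-1}\phi(x^\infty)^{N-1}, \]
which is \eqref{D-6}. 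I expect the main obstacle to be the passage from time-varying connectivity to \emph{entrywise} lower bounds on $\Psi$ and their propagation through the product: one must verify that every union-graph edge truly contributes weight $\ge\beta$ at first order in the Peano--Baker expansion, and that Proposition \ref{P2.1} applies block-by-block so that the $N-1$ spanning trees compound into a scrambling pattern carrying the uniform lower bound $\beta^{N-1}$ (with the diagonal-versus-$\beta$ comparison handled in the regime $\beta\le1$).
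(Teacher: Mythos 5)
Your proposal follows essentially the same route as the paper's proof: stochasticity via the constant solution $\mathbf{1}$ of \eqref{D-2} together with nonnegativity of the $\kappa$-shifted generator through the Peano--Baker series (Lemma \ref{L2.2}), and the ergodicity bound via the dwell time $a$ from $(\mathcal A2)$, the monotonicity bound $\phi\ge\phi(x^\infty)$ from $(\mathcal P2)$, the decomposition into $N-1$ spanning-tree blocks, Proposition \ref{P2.1}, the integrality observation $\mu\big(\prod G\big)\ge1$, and monotonicity \eqref{B-1-1} plus homogeneity of $\mu$. The only organizational difference is harmless: the paper works per constant-$\sigma$ subinterval, obtaining $\Phi_{k_p}\ge e^{-\kappa(t_{\ell_{p+1}}-t_{\ell_p})}\big(I+\frac{a}{N}\underline{A}_{k_p}\big)$ and then expanding the product of these factors to first order, whereas you take a single first-order Peano--Baker lower bound $\Psi\ge I+\int B\,d\tau$ over the whole block (which also silently extends Lemma \ref{L2.2} to a piecewise continuous generator; this is fine, since the identity composes over the switching subintervals).

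The one genuine defect is your restriction to the regime $\beta:=\frac{a}{N}\phi(x^\infty)\le1$, which is not a hypothesis of the lemma: if $\beta>1$, your entrywise comparison $\Psi(t^*_{\ell+1},t^*_\ell)\ge\beta G_\ell$ fails on the diagonal, where you only secured the value $1$. The gap closes immediately within your own argument: the diagonal of the shifted generator satisfies $B_{ii}(t)=\kappa-\frac1N\sum_{j\ne i}a^{\sigma_t}_{ij}(t)\ge\frac{\kappa}{N}$, and each block contains at least $n\ge1$ full switching intervals of length at least $a$, so $\int B_{ii}\,d\tau\ge\frac{a\kappa}{N}\ge\beta$ and hence $\Psi_{ii}\ge1+\beta\ge\beta$ unconditionally. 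The paper avoids the issue by different bookkeeping: it retains the matrices $\underline{A}_{k_p}$, whose diagonal entries equal $\kappa\ge\phi(x^\infty)$, so after dropping the identity in \eqref{E-8-2} the diagonal part of the comparison $\frac{a}{N}\sum_p\underline{A}_{k_p}\ge\frac{a}{N}\phi(x^\infty)F_i$ in \eqref{E-9} is automatic. With that one amendment your proof is complete; everything else, including your direct nonnegativity argument for stochasticity (the paper instead reads nonnegativity off its entrywise lower bound \eqref{E-10-2}), matches the paper's proof in substance.
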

\begin{proof} First, we focus on the second assertion, and we claim:
\begin{align}
\begin{aligned}\label{E-4}
\Phi(t^*_{r(N-1)}, t^*_{(r-1)(N-1)}) &\geq e^{-\kappa (t^*_{r(N-1)}- t^*_{(r-1)(N-1)})}
\left(  \frac{a}{N}\right)^{N-1}
\phi(x^\infty)^{N-1}\prod_{i=1}^{N-1}F_i,
\end{aligned}
\end{align}
 where, for each $i=1,\ldots,N-1$, $F_i$ is the $(0,1)$-adjacency matrix of the union digraph
$$\G([t^*_{(r-1)(N-1)+i-1},t^*_{(r-1)(N-1)+i})).$$
{\it Proof of claim \eqref{E-4}}:  Let $\{t_{\ell_1},t_{\ell_2},\ldots,t_{\ell_{q+1}}\}$ be the subsequence
of $\{t_\ell\}_{\ell\geq0}$ contained in the interval
$[t^*_{(r-1)(N-1)+i-1},t^*_{(r-1)(N-1)+i}]$ such that 
\[ t_{\ell_1}=t^*_{(r-1)(N-1)+i-1} \quad \mbox{and} \quad t_{\ell_{q+1}}=t^*_{(r-1)(N-1)+i}. \]
We set
\[ \sigma_t=k_p \quad \mbox{for~~$t\in[t_{\ell_p},t_{\ell_{p+1}})$ and $p=1,\ldots,q$}. \]
Then we have\newline

\vspace{-0.6cm}

\begin{equation}\label{E-4-1}
\Phi(t^*_{(r-1)(N-1)+i}, t^*_{(r-1)(N-1)+i-1})=\Phi_{k_q}(t_{\ell_{q+1}}, t_{\ell_q})\cdots\Phi_{k_1}(t_{\ell_2}, t_{\ell_1}),
\end{equation}
where, for $p=1,\ldots, q$, $\Phi_{k_p}(t_{\ell_{p+1}}, t_{\ell_p})$ is the state transition matrix corresponding to system \eqref{D-3} on $[t_{\ell_p},t_{\ell_{p+1}})$. We need to estimate $\Phi_{k_p}(t_{\ell_{p+1}}, t_{\ell_p})$ and for this, we estimate the coefficient matrix for \eqref{D-2} as follows:
\begin{equation} \label{E-5}
-\frac{1}{N} L_{k_p}(t)
=\frac{1}{N} (A_{k_p}(t)-D_{k_p}(t))
\geq\frac{1}{N}\underline{A}_{k_p} -\kappa I,
\end{equation}
where $\underline{A}_{k_p}=(\underline{a}_{ij}^{k_p})$ is given by
\begin{equation*}
\underline{a}_{ij}^{k_p}:=\begin{cases}
\chi_{ij}^{k_p}\phi(x^\infty), \quad & i\neq j, \cr
\kappa, \quad & i=j.
\end{cases}
\end{equation*}
Then, the relation \eqref{E-5} implies 
\begin{equation}\label{E-6}
-\frac{1}{N} L_{k_p}(t)+\kappa I\geq \frac{1}{N}\underline{A}_{k_p
}\geq0.
\end{equation}
On the other hand, let $\Psi_{k_p}(t_{\ell_{p+1}}, t_{\ell_p})$ be the state transition matrix of 
\[ -\frac{1}{N} L_{k_p}(t)+\kappa I \quad \mbox{on}~~[t_{\ell_p}, t_{\ell_{p+1}}). \]
Then it follows from Lemma \ref{L2.2} that
\begin{equation} \label{E-7}
\Phi_{k_p}\left(t_{\ell_{p+1}},t_{\ell_p}\right)=
e^{-\kappa (t_{\ell_{p+1}}-t_{\ell_p})}
\Psi_{k_p}\left(t_{\ell_{p+1}},t_{\ell_p}\right).
\end{equation}
Now, we can apply \eqref{E-6} to the Peano-Baker series to obtain

\begin{align} \label{E-8}
\begin{aligned}
&\Psi_{k_p}\left(t_{\ell_{p+1}},t_{\ell_p}\right) \\
& \hspace{0.5cm} =I+\sum_{n=1}^\infty \hspace{-0.05cm}\int_{t_{\ell_p}}^{t_{\ell_{p+1}}} \hspace{-0.25cm} \int_{t_{\ell_p}}^{\tau_1}\hspace{-0.2cm}\cdots\hspace{-0.1cm}\int_{t_{\ell_p}}^{\tau_{n-1}}
\Big[\hspace{-0.05cm}\big(-\frac{1}{N} L_{k_p}(\tau_1)+\kappa I \big)\cdots \big(-\frac{1}{N} L_{k_p}(\tau_n) +\kappa I\big)\hspace{-0.05cm} \Big]d\tau_n\cdots d\tau_1\\
&\hspace{0.5cm} \geq I+\sum_{n=1}^\infty\hspace{-0.05cm}\int_{t_{\ell_p}}^{t_{\ell_{p+1}}} \hspace{-0.25cm} \int_{t_{\ell_p}}^{\tau_1}\hspace{-0.2cm}\cdots\hspace{-0.1cm}\int_{t_{\ell_p}}^{\tau_{n-1}}\Big(\frac{1}{N}\underline{A}_{k_p} \Big)^n d\tau_n\cdots d\tau_1\\
&\hspace{0.5cm} = I+\sum_{n=1}^\infty\frac{1}{n!} (t_{\ell_{p+1}}-t_{\ell_p})^n\Big(\frac{1}{N}\underline{A}_{k_p}\Big)^n \\
&\hspace{0.5cm} \geq I+ \frac{a}{N}\underline{A}_{k_p}.
\end{aligned}
\end{align}
We combine \eqref{E-7} with \eqref{E-8} to obtain

\begin{equation} \label{E-8-1}
\Phi_{k_r}\left(t_{\ell_{p+1}},t_{\ell_p}\right)\geq
e^{-\kappa (t_{\ell_{p+1}}-t_{\ell_p})}\left(I+
  \frac{a}{N}
\underline{A}_{k_p}\right).
\end{equation}
Then, the relation \eqref{E-8-1} and \eqref{E-4-1} yield
\begin{align}
\begin{aligned}\label{E-8-2}
& \Phi(t^*_{(r-1)(N-1)+i}, t^*_{(r-1)(N-1)+i-1}) \geq e^{-\kappa (t_{\ell_{q+1}}-t_{\ell_1})}
\left(I+ \frac{a}{N}\underline{A}_{k_q}\right)\cdots
\left(I+
  \frac{a}{N}
\underline{A}_{k_1}\right)\\
&\hspace{1cm} \geq e^{-\kappa (t^*_{(r-1)(N-1)+i}- t^*_{(r-1)(N-1)+i-1})}  \frac{a}{N}(\underline{A}_{k_q}+\cdots
+\underline{A}_{k_1}).
\end{aligned}
\end{align}
Here, one has

\begin{equation}\label{E-9}
\underline{A}_{k_q}+\cdots+\underline{A}_{k_1}\geq \phi(x^\infty)F_{i}.
\end{equation}
Now, we combine \eqref{E-8-2} with \eqref{E-9} to obtain
\[ \Phi(t^*_{(r-1)(N-1)+i}, t^*_{(r-1)(N-1)+i-1}) \geq e^{-\kappa (t^*_{(r-1)(N-1)+i}- t^*_{(r-1)(N-1)+i-1})} \frac{a}{N}
\phi(x^\infty)F_{i}. \]
This implies
\begin{align}
\begin{aligned}\label{E-10-2}
\Phi(t^*_{r(N-1)}, t^*_{(r-1)(N-1)}) &=\prod_{i=1}^{N-1}\Phi(t^*_{(r-1)(N-1)+i}, t^*_{(r-1)(N-1)+i-1})\\
&\geq e^{-\kappa (t^*_{r(N-1)}- t^*_{(r-1)(N-1)})}
\left(  \frac{a}{N}\right)^{N-1}
\phi(x^\infty)^{N-1}\prod_{i=1}^{N-1}F_i.
\end{aligned}
\end{align}
This verifies the claim \eqref{E-4}. Since the union digraph
\[ \G([t^*_{(r-1)(N-1)+i-1}, t^*_{(r-1)(N-1)+i}))=\G(F_i) \]
has a spanning tree, we apply Proposition \ref{P2.1} to see that $F_1F_2\ldots F_{N-1}$ is scrambling and moreover, \eqref{B-1} yields
 \begin{equation} \label{E-10-3}
 \mu\left(\prod_{i=1}^{N-1}F_i\right)\geq1.
 \end{equation}
Hence, we use \eqref{B-1-1} and \eqref{E-10-3} to get
 \[
 \mu\left(\Phi(t^*_{r(N-1)}, t^*_{(r-1)(N-1)})\right)\geq  e^{-\kappa (t^*_{r(N-1)}- t^*_{(r-1)(N-1)})}
\left(  \frac{a}{N}\right)^{N-1}
\phi(x^\infty)^{N-1}.
 \]
This verifies the relation \eqref{D-6}. 

\vspace{0.4cm}

\noindent For the first assertion,  $\Phi(t^*_{r(N-1)}, t^*_{(r-1)(N-1)})$ is nonnegative by \eqref{E-10-2}. So it remains to show that each of its rows sums to 1. Note that the constant state $\xi(t):=[\xi_1(t),\cdots,\xi_N(t)]^\top\equiv [1,\cdots,1]^\top$ is a solution to $\eqref{D-2}$:
\[
\frac{d}{dt} {\xi}(t) =-\frac{1}{N} L_{\sigma_t}(t) {\xi}(t).
\]
Hence,
\[
[1,\cdots,1]^\top=\Phi(t^*_{r(N-1)}, t^*_{(r-1)(N-1)})[1,\cdots,1]^\top.
\]
This implies that $\Phi(t^*_{r(N-1)}, t^*_{(r-1)(N-1)})$ is stochastic.

\end{proof}

\begin{proposition}\label{P4.1}
\emph{(A priori velocity alignment)} Suppose that the a priori assumptions $({\mathcal P}1) - ({\mathcal P}2)$ hold. Then, for  all $t\in[t^*_{r(N-1)},t^*_{(r+1)(N-1)})$ with  $r \in\bbn$, we have
\begin{align*}
\begin{aligned}
&\D\left(V(t)\right) \leq \D({V}(0))  \exp\left[ -\left(  \frac{a\phi(x^\infty)e^{-\kappa bn}(N-1)^{-\kappa bc}}{N}\right)^{N-1}\frac{(r+1)^{1-\kappa b (N-1) c } -1 }{1-\kappa b (N-1) c  }\right].
\end{aligned}
\end{align*}
\end{proposition}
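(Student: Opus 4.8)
The plan is to turn the block transition matrices supplied by Lemma \ref{L4.1} into a telescoping contraction of the velocity diameter, and then to sum the resulting per-block exponents. First I would use the representation formula \eqref{D-3} on the block $[t^*_{(s-1)(N-1)},t^*_{s(N-1)}]$,
\[ V(t^*_{s(N-1)}) = \Phi\big(t^*_{s(N-1)},t^*_{(s-1)(N-1)}\big)\,V(t^*_{(s-1)(N-1)}), \]
together with the fact that this transition matrix is stochastic (Lemma \ref{L4.1}). Feeding it into the stochastic-matrix estimate (Lemma \ref{L2.1}) with $B=0$ yields the one-block contraction
\[ \D\big(V(t^*_{s(N-1)})\big) \le (1-\mu_s)\,\D\big(V(t^*_{(s-1)(N-1)})\big), \qquad \mu_s := \mu\big(\Phi(t^*_{s(N-1)},t^*_{(s-1)(N-1)})\big). \]
Chaining this from $s=1$ to $s=r$ and using the convention $t^*_0=t_0=0$ gives $\D(V(t^*_{r(N-1)}))\le \prod_{s=1}^{r}(1-\mu_s)\,\D(V(0))$.

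For an arbitrary $t\in[t^*_{r(N-1)},t^*_{(r+1)(N-1)})$ I would then invoke the pathwise non-increase of the velocity diameter to write $\D(V(t))\le\D(V(t^*_{r(N-1)}))$, and apply $1-x\le e^{-x}$ to pass to $\D(V(t))\le \D(V(0))\exp\big(-\sum_{s=1}^{r}\mu_s\big)$. From here the whole statement reduces to the lower bound $\sum_{s=1}^{r}\mu_s \ge C\,\dfrac{(r+1)^{1-\gamma}-1}{1-\gamma}$ with $C:=\big(\tfrac{a\phi(x^\infty)e^{-\kappa bn}(N-1)^{-\kappa bc}}{N}\big)^{N-1}$ and $\gamma:=\kappa b(N-1)c$.

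The hard part will be the lower bound on each $\mu_s$, which is where the precise form of the exponent is manufactured. Starting from the ergodicity estimate \eqref{D-6}, I only need to control the block length $\Delta_s:=t^*_{s(N-1)}-t^*_{(s-1)(N-1)}$ from above so that $e^{-\kappa\Delta_s}$ stays large. Writing $\Delta_s$ as a telescoping sum of increments $t_{\ell+1}-t_\ell$, each at most $b$ by $(\mathcal A2)$, and inserting the recurrence \eqref{D-3-1} for $a_\ell(n,c)$, I would estimate
\[ \Delta_s \le b\!\!\sum_{\ell=(s-1)(N-1)}^{s(N-1)-1}\!\!\big(n+\lfloor c\log(\ell+1)\rfloor\big) \le b(N-1)n+bc(N-1)\log\big(s(N-1)\big), \]
bounding each $\log(\ell+1)$ by $\log(s(N-1))$ over the $N-1$ terms of the block. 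Exponentiating gives $e^{-\kappa\Delta_s}\ge e^{-\kappa b(N-1)n}(N-1)^{-\gamma}s^{-\gamma}$, which combined with \eqref{D-6} produces exactly $\mu_s\ge C\,s^{-\gamma}$.

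Finally, because the standing constraint $\kappa b(N-1)c<1$ forces $\gamma<1$, I would bound the sum below by the integral, using that $x^{-\gamma}$ is decreasing so $s^{-\gamma}\ge\int_s^{s+1}x^{-\gamma}\,dx$:
\[ \sum_{s=1}^{r}\mu_s \ge C\sum_{s=1}^{r}s^{-\gamma}\ge C\int_1^{r+1}x^{-\gamma}\,dx = C\,\frac{(r+1)^{1-\gamma}-1}{1-\gamma}, \]
which is precisely the claimed exponent. The delicate points to monitor are the floor/logarithm bookkeeping so that the factors $e^{-\kappa bn(N-1)}$, $(N-1)^{-\gamma}$ and the power $s^{-\gamma}$ line up with $C$, and the use of $\gamma<1$, which is exactly what makes the integral comparison yield a growing (rather than bounded) exponent and hence genuine asymptotic alignment.
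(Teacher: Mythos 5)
Your proposal is correct and follows essentially the same route as the paper: block contraction via the stochastic transition matrices of Lemma \ref{L4.1} and the diameter estimate of Lemma \ref{L2.1}, the bound $1-x\le e^{-x}$, the estimate $\Delta_s\le b\big((N-1)n+c(N-1)\log(s(N-1))\big)$ from $(\mathcal A2)$ and the recurrence \eqref{D-3-1}, and finally the integral comparison $\sum_{s=1}^r s^{-\gamma}\ge\int_1^{r+1}x^{-\gamma}\,dx$ with $\gamma=\kappa b(N-1)c<1$. The only cosmetic difference is that you isolate the per-block bound $\mu_s\ge C s^{-\gamma}$ before summing, while the paper carries everything in a single chain of inequalities; the constants line up exactly.
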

\begin{proof}
Since $\Phi(t^*_{r(N-1)}, t^*_{(r-1)(N-1)})$ is stochastic (Lemma \ref{L4.1}), we combine Lemma \ref{L2.1} and Lemma \ref{L4.1} to obtain that for $t\in[t^*_{r(N-1)},t^*_{(r+1)(N-1)})$,
\begin{align*}
\begin{aligned}
\D\left(V(t)\right)&\leq\D\left(V(t^*_{r(N-1)})\right)\leq\left[ 1-\mu\Big(\Phi(t^*_{r(N-1)}, t^*_{(r-1)(N-1)})\Big)\right ] \D(V( t^*_{(r-1)(N-1)}))\\
& \leq \left[ 1-e^{-\kappa (t^*_{r(N-1)}- t^*_{(r-1)(N-1)})}
\left(  \frac{a\phi(x^\infty)}{N}\right)^{N-1}
 \right ] \D(V( t^*_{(r-1)(N-1)}))\\
&\leq \exp\left[ -e^{-\kappa (t^*_{r(N-1)}- t^*_{(r-1)(N-1)})}
\left(  \frac{a\phi(x^\infty)}{N}\right)^{N-1}\right ] \D(V( t^*_{(r-1)(N-1)}))\\
&\leq\cdots\leq \exp\left[ -\left(  \frac{a\phi(x^\infty)}{N}\right)^{N-1}\sum_{i=1}^r e^{-\kappa (t^*_{i(N-1)}- t^*_{(i-1)(N-1)})} \right] \D({V}(0))\\
&\leq\exp\left[ -\left(  \frac{a\phi(x^\infty)}{N}\right)^{N-1}\sum_{i=1}^r e^{-\kappa b (a_{i(N-1)}(n,c)- a_{(i-1)(N-1)}(n,c))} \right ] \D({V}(0))\\
&=\exp\left[ -\left(  \frac{a\phi(x^\infty)}{N}\right)^{N-1}\sum_{i=1}^r e^{-\kappa b \left( (N-1)n+\sum_{j=(i-1)(N-1)+1}^{i(N-1)}\lfloor c \log j\rfloor\right)} \right ] \D({V}(0)) \\
&\leq \exp\left[ -\left(  \frac{a\phi(x^\infty)}{N}\right)^{N-1}\sum_{i=1}^r e^{-\kappa b (N-1)\left( n+c \log (i(N-1))\right)} \right ] \D({V}(0))\\
&= \exp\left[ -\left(  \frac{a\phi(x^\infty)e^{-\kappa bn}(N-1)^{-\kappa bc}}{N}\right)^{N-1}\sum_{i=1}^r i^{-\kappa b (N-1) c   } \right ] \D({V}(0))\\
&\leq \exp\left[ -\left(  \frac{a\phi(x^\infty)e^{-\kappa bn}(N-1)^{-\kappa bc}}{N}\right)^{N-1}\int_{1}^{r+1} x^{-\kappa b (N-1) c   } dx\right ] \D({V}(0))\\
&\leq \exp\left[ -\left(  \frac{a\phi(x^\infty)e^{-\kappa bn}(N-1)^{-\kappa bc}}{N}\right)^{N-1}\frac{(r+1)^{1-\kappa b (N-1) c } -1 }{1-\kappa b (N-1) c  }\right ] \D({V}(0)).
\end{aligned}
\end{align*}
\end{proof}

Next, we assert that our a priori condition on the uniform boundedness of distances between CS particles can be obtained from other existing a priori conditions. Before we move on, we present a technical lemma.

\begin{lemma}\label{L4.2}
For any $x>0$ and $\delta>0$, we have the following inequality:
\[
e^{-x}\leq \bigg(\frac{\delta}{e}\bigg)^\delta x^{-\delta}.
\]
\end{lemma}
\begin{proof}
By differentiation, we can check that the function $x\mapsto -x+\delta\log x$ attains its maximal value at $x=\delta$. Hence
\[
-x+\delta\log x\leq -\delta+\delta\log\delta,\quad x>0,~\delta>0.
\]
We take the exponential of both sides to get
\[
e^{-x}x^\delta\leq e^{-\delta}\delta^\delta \quad\Longrightarrow\quad e^{-x}\leq \bigg(\frac{\delta}{e}\bigg)^\delta x^{-\delta}.
\]
\end{proof}
Next, we show that the a priori assumption $({\mathcal P}2)$ on the position diameter can be replaced by the condition on the initial data so that we can establish pathwise flocking estimate under the a priori condition on the network topology. 
\begin{proposition}\label{P4.2}
Suppose that  a priori condition $({\mathcal P}1)$ holds, and  there exist $\delta>0$ and $x^\infty>0$ independent of a sample point such that
\begin{align}\label{C-100}
\begin{aligned}
&\D(X(0)) +\D({V}(0))   b(N-1)(n+c\log ( (N-1)))   \\
&\qquad +\D({V}(0)) b(N-1)\bigg(\frac{\delta}{e }\bigg)^{\delta}\left(  \frac{a\phi(x^\infty)e^{-\kappa bn}(N-1)^{-\kappa bc}}{N}\right)^{-(N-1)\delta}\\
&\qquad\quad\times \sum_{r=1}^\infty\Bigg((n+c\log ((r+1)(N-1)))\left(\frac{(r+1)^{1-\kappa b (N-1) c } -1 }{1-\kappa b (N-1) c  }\right)^{-\delta}\Bigg) < x^{\infty},
\end{aligned}
\end{align}
and let $(X,V)$ be a solution process to \eqref{CS}. Then, a priori condition $({\mathcal P}2)$ holds: for $\omega \in \Omega$, 
\[  \sup_{0 \leq t < \infty} \mathcal D(X(t,\omega))<x^\infty. \]
\end{proposition}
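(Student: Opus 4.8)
The plan is to run a continuity (bootstrap) argument that breaks the apparent circularity between assumption $(\mathcal P2)$ and the velocity decay of Proposition \ref{P4.1}: the estimate there is only legitimate while $\D(X)\le x^\infty$, since its proof uses $\phi(\D(X(s)))\ge\phi(x^\infty)$ through the lower matrix $\underline A_{k_p}$, yet that bound is exactly what we wish to conclude. First I would record the elementary differential inequality for the position diameter. Since $\dot x_i=v_i$, for a.e.\ $t$ the maximizing pair $(i,j)$ for $\D(X(t))$ gives
\[
\frac{d}{dt}\D(X(t)) = \left\langle \frac{x_i-x_j}{\|x_i-x_j\|},\, v_i-v_j\right\rangle \le \|v_i-v_j\| \le \D(V(t)),
\]
so that $\D(X(t))\le \D(X(0))+\int_0^t \D(V(s))\,ds$ for every $t\ge0$.

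Next I would introduce the bootstrap time
\[
S^\ast := \sup\{\, s\ge 0 :\ \D(X(t))\le x^\infty \ \text{for all}\ 0\le t\le s\,\}.
\]
Because \eqref{C-100} forces $\D(X(0))<x^\infty$, continuity of $t\mapsto\D(X(t))$ gives $S^\ast>0$. Arguing by contradiction, suppose $S^\ast<\infty$; then $\D(X(t))\le x^\infty$ on $[0,S^\ast]$ with $\D(X(S^\ast))=x^\infty$. On this interval $(\mathcal P2)$ effectively holds, so Proposition \ref{P4.1} is applicable block-by-block on $[0,S^\ast]$ and yields the stated decay of $\D(V)$ on each time-block $[t^\ast_{r(N-1)},t^\ast_{(r+1)(N-1)})$.

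The core computation is then to bound $\int_0^{S^\ast}\D(V(s))\,ds$ by the convergent series in \eqref{C-100}. I would estimate each block length using $\mathrm{supp}\,f\subset[a,b]$ and the recurrence \eqref{D-3-1}, namely $t^\ast_{(r+1)(N-1)}-t^\ast_{r(N-1)}\le b\big(a_{(r+1)(N-1)}(n,c)-a_{r(N-1)}(n,c)\big)\le b(N-1)\big(n+c\log((r+1)(N-1))\big)$, where $\lfloor c\log(\ell+1)\rfloor\le c\log((r+1)(N-1))$ across the $N-1$ indices of block $r$. For the $r=0$ block one uses only monotonicity $\D(V(s))\le\D(V(0))$, producing the term $\D(V(0))\,b(N-1)(n+c\log(N-1))$. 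For $r\ge1$ I would multiply the block length by the decay factor of Proposition \ref{P4.1} and convert the exponential into a power via Lemma \ref{L4.2} with the given $\delta$:
\[
\exp\big[-C_r\big]\le \Big(\tfrac{\delta}{e}\Big)^{\delta} C_r^{-\delta}, \qquad C_r := \Big(\tfrac{a\phi(x^\infty)e^{-\kappa bn}(N-1)^{-\kappa bc}}{N}\Big)^{N-1}\,\tfrac{(r+1)^{1-\kappa b(N-1)c}-1}{1-\kappa b(N-1)c}.
\]
Summing over $r\ge1$ reproduces precisely the last series in \eqref{C-100}, whence $\D(X(S^\ast))\le \D(X(0))+\int_0^{S^\ast}\D(V(s))\,ds < x^\infty$, contradicting $\D(X(S^\ast))=x^\infty$. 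Thus $S^\ast=\infty$, and the same chain of inequalities gives $\sup_{t\ge0}\D(X(t))<x^\infty$.

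The main obstacle is the bootstrap itself: one must verify that Proposition \ref{P4.1}'s decay is genuinely available on $[0,S^\ast]$ even though its statement presupposes the global bound $(\mathcal P2)$. This is fine because its proof only invokes $\phi(\D(X(s)))\ge\phi(x^\infty)$ for $s$ up to the evaluation time, which holds on $[0,S^\ast]$; and since the block bounds are nonnegative, $\int_0^{S^\ast}\D(V)\,ds$ is dominated by the full sum $\sum_{r\ge0}$ of the block bounds regardless of where $S^\ast$ lands. The convergence of that sum (needed for \eqref{C-100} to be meaningful) is ensured by $(\mathcal P1)$, which gives $\kappa b(N-1)c<1$ and hence a genuinely growing argument $(r+1)^{1-\kappa b(N-1)c}$, provided $\delta$ is taken large enough that $\delta\,(1-\kappa b(N-1)c)>1$ dominates the logarithmic factor.
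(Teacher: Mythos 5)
Your proposal is correct and follows essentially the same route as the paper: a contradiction/bootstrap argument on the first time the position diameter reaches $x^\infty$, with $\D(X(T^*))\le\D(X(0))+\int_0^{T^*}\D(V(t))\,dt$, block lengths bounded by $b(N-1)\bigl(n+c\log((r+1)(N-1))\bigr)$ via $\mathrm{supp}\,f\subset[a,b]$ and \eqref{D-3-1}, the decay of Proposition \ref{P4.1} on each block, and Lemma \ref{L4.2} converting the exponential into the $\delta$-power so the sum is exactly the series in \eqref{C-100}. Your explicit remark that Proposition \ref{P4.1} is legitimately applicable up to the bootstrap time because its proof only uses $\phi(\D(X(s)))\ge\phi(x^\infty)$ there is a detail the paper leaves implicit, but it does not change the argument.
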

\begin{proof}
We use a contradiction argument for the desired estimate. For this, we define a set ${\mathcal T}$ and its supremum as follows:
\[
{\mathcal T}:=\Big\{T>0:\quad \max_{0 \leq t \leq T} \D(X(t))<x^{\infty} \Big\}, \quad T^*:=\sup {\mathcal T}.
\]
By assumption \eqref{C-100} and the continuity of $\D(X)$, the set ${\mathcal T}$ is nonempty. Now, we claim:
\[ \sup {\mathcal T}=\infty. \]
Suppose not, i.e. $T^*:=\sup {\mathcal T} <\infty$. Then,  we have
\begin{equation} \label{C-101}
\D(X(T^*))=x^{\infty}.
\end{equation}
It follows from Proposition \ref{P4.1} and Lemma \ref{L4.2} that we have
\begin{align*}
x^\infty &= \D(X(T^*))  \leq \D(X(0))+\int_0^{T^*} \D(V(t))dt  \\
& \leq \D(X(0))+\D({V}(0))\sum_{r=0}^\infty\Bigg[ (t^*_{(r+1)(N-1)}-t^*_{r(N-1)})  \\
&\quad\times\exp\left(-\left(  \frac{a\phi(x^\infty)e^{-\kappa bn}(N-1)^{-\kappa bc}}{N}\right)^{N-1}\frac{(r+1)^{1-\kappa b (N-1) c } -1 }{1-\kappa b (N-1) c  }\right)\Bigg] \\
&\leq\D(X(0))+\D({V}(0))\sum_{r=0}^\infty\Bigg[  b(N-1)(n+c\log ((r+1)(N-1)))  \\
&\quad\times\exp\left(-\left(  \frac{a\phi(x^\infty)e^{-\kappa bn}(N-1)^{-\kappa bc}}{N}\right)^{N-1}\frac{(r+1)^{1-\kappa b (N-1) c } -1 }{1-\kappa b (N-1) c  }\right)\Bigg] \\
& \leq\D(X(0)) +\D({V}(0))   b(N-1)(n+c\log ( (N-1)))   \\
&\quad +\D({V}(0))\sum_{r=1}^\infty\Bigg[  b(N-1)(n+c\log ((r+1)(N-1)))\bigg(\frac{\delta}{e }\bigg)^{\delta}\\
&\qquad\times \left(\left(  \frac{a\phi(x^\infty)e^{-\kappa bn}(N-1)^{-\kappa bc}}{N}\right)^{N-1}\frac{(r+1)^{1-\kappa b (N-1) c } -1 }{1-\kappa b (N-1) c  }\right)^{-\delta}\Bigg] \\
& < x^{\infty}.
\end{align*}
This yields a contradiction to \eqref{C-101}. Therefore we have $\sup {\mathcal T}=\infty$.
\end{proof}
As a corollary, we can use Proposition \ref{P4.2} to prove that a priori condition on network structures together with conditions in Theorem \ref{T3.1} implies the uniform boundedness of distances between particles and the velocity relaxation estimates for any initial configuration.
\begin{corollary}\label{C4.1}
Suppose that  a priori condition $({\mathcal P}1)$ holds, and in addition to \eqref{comm}, the communication weight $\phi$ satisfies 
\[ \frac{1}{\phi(|x|)}={\mathcal O}(|x|^{ \varepsilon }) \quad \mbox{as $|x| \to\infty$}, \]
where $\varepsilon$ is a positive constant satisfying the relation $0\leq\varepsilon<\frac{1-\kappa b(N-1)c}{N-1}$. Then the mono-cluster flocking emerges pathwise for any initial configuration: for $\omega \in \Omega$, there exists $x^\infty >0$ such that
\[
\sup_{0\leq t<\infty}\mathcal D(X(t,\omega))\leq x^\infty \quad \mbox{and}\quad \lim_{t\to\infty}\mathcal D(V(t,\omega))=0.
\]
\end{corollary}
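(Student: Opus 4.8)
The plan is to extract $(\mathcal{P}2)$ from $(\mathcal{P}1)$ together with the decay hypothesis on $\phi$, by verifying that the structural inequality \eqref{C-100} required by Proposition \ref{P4.2} can be satisfied, and then to read off the velocity alignment from Proposition \ref{P4.1}. Concretely, I would produce a parameter $\delta>0$ and a threshold $x^\infty>0$, both independent of $\omega$ and valid for any fixed initial configuration, such that \eqref{C-100} holds. Proposition \ref{P4.2} then yields $\sup_{0\le t<\infty}\D(X(t,\omega))<x^\infty$, which is exactly $(\mathcal{P}2)$; with both a priori assumptions now in force, Proposition \ref{P4.1} supplies the explicit pathwise bound on $\D(V(t))$, and letting $t\to\infty$ forces the velocity diameter to zero.

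The crux is choosing $\delta$ to meet two competing demands. On one hand, the infinite series in \eqref{C-100} has $r$-th summand of order $\log r\cdot\big(r^{1-\kappa b(N-1)c}\big)^{-\delta}=\log r\cdot r^{-\delta(1-\kappa b(N-1)c)}$, so its convergence requires $\delta(1-\kappa b(N-1)c)>1$, that is $\delta>\frac{1}{1-\kappa b(N-1)c}$; here $1-\kappa b(N-1)c>0$ by $(\mathcal{P}1)$. On the other hand, the growth condition $\frac{1}{\phi(|x|)}=\mathcal{O}(|x|^{\varepsilon})$ gives $\phi(x^\infty)^{-(N-1)\delta}\lesssim (x^\infty)^{\varepsilon(N-1)\delta}$, and since this is the only factor through which $x^\infty$ enters the left-hand side of \eqref{C-100}, the whole left-hand side grows at most like $(x^\infty)^{\varepsilon(N-1)\delta}$. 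To dominate it by the linear right-hand side $x^\infty$ we need $\varepsilon(N-1)\delta<1$, i.e.\ $\delta<\frac{1}{\varepsilon(N-1)}$, which is vacuous when $\varepsilon=0$.

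A $\delta$ meeting both bounds exists precisely when $\frac{1}{1-\kappa b(N-1)c}<\frac{1}{\varepsilon(N-1)}$, equivalently $\varepsilon<\frac{1-\kappa b(N-1)c}{N-1}$, which is exactly the hypothesis. Fixing such a $\delta$, the left-hand side of \eqref{C-100} is bounded by $C_1+C_2(x^\infty)^{\varepsilon(N-1)\delta}$ with $C_1,C_2$ finite and independent of $x^\infty$ (depending only on the initial data and the fixed parameters), so by $\varepsilon(N-1)\delta<1$ it is eventually smaller than $x^\infty$; taking $x^\infty$ large enough secures \eqref{C-100}, hence $(\mathcal{P}2)$. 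Finally, in the bound of Proposition \ref{P4.1} the exponent $-\big(\tfrac{a\phi(x^\infty)e^{-\kappa bn}(N-1)^{-\kappa bc}}{N}\big)^{N-1}\frac{(r+1)^{1-\kappa b(N-1)c}-1}{1-\kappa b(N-1)c}$ tends to $-\infty$ as $t\to\infty$, because $1-\kappa b(N-1)c>0$ makes $(r+1)^{1-\kappa b(N-1)c}$ diverge and the block index $r\to\infty$ (the switching increments are bounded below by $a>0$ under $(\mathcal{A}2)$); thus $\D(V(t,\omega))\to0$. I expect the main obstacle to be the bookkeeping of the $x^\infty$-dependence: one must confirm that $x^\infty$ enters \eqref{C-100} solely through $\phi(x^\infty)^{-(N-1)\delta}$, that the residual $r$-sum converges to an $x^\infty$-independent constant, and that the two opposing inequalities on $\delta$ can be balanced simultaneously against the single linear term on the right.
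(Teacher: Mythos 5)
Your proposal is correct and follows essentially the same route as the paper: you choose $\delta$ in the window $\frac{1}{1-\kappa b(N-1)c}<\delta<\frac{1}{(N-1)\varepsilon}$ (vacuous upper bound for $\varepsilon=0$), use the lower bound to make the $r$-series in \eqref{C-100} converge and the upper bound to make the left-hand side grow sublinearly in $x^\infty$ through the sole factor $\phi(x^\infty)^{-(N-1)\delta}$, then take $x^\infty$ large and invoke Propositions \ref{P4.2} and \ref{P4.1}, exactly as the paper does. Your explicit bookkeeping of the $x^\infty$-dependence and the $t\to\infty$ limit via $r\to\infty$ is a slightly more detailed rendering of the paper's observation that $\lim_{|x|\to\infty}\phi(|x|)^{-(N-1)\delta}/|x|=0$, not a different argument.
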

\begin{proof} We choose a positive number $\delta>0$ such that 
\begin{equation}\label{C3-1.1}
\frac{1}{1-\kappa b(N-1)c}<\delta<\frac{1}{(N-1)\varepsilon}.
\end{equation}
The left-hand side in \eqref{C3-1.1} implies
\[
\sum_{r=1}^\infty\Bigg[ (n+c\log ((r+1)(N-1)))\left(\frac{(r+1)^{1-\kappa b (N-1) c } -1 }{1-\kappa b (N-1) c  }\right)^{-\delta}\Bigg ] <\infty.
\]
Moreover, the right-hand side in \eqref{C3-1.1} implies 
\[ \phi(|x|)^{-(N-1)\delta}=O(|x|^{ (N-1)\delta \varepsilon }) \quad \mbox{as $|x| \to\infty$}. \]
Hence, one has
\[
\lim_{|x| \to\infty}\frac{\phi(|x|)^{-(N-1)\delta}}{|x|}=0.
\]
This implies the existence of $x^\infty$ satisfying \eqref{C-100} for $\delta$ chosen in \eqref{C3-1.1}. Hence the condition \eqref{C-100} is satisfied, and the results follow from Proposition \ref{P4.1} and Proposition \ref{P4.2}.
\end{proof}

\subsection{Emergence of stochastic flocking} \label{sec:4.3} In the previous section, we verified the emergence of pathwise flocking under the a priori assumption on the network structure $({\mathcal P}1)$. In the sequel, we will show that the a priori assumption  $({\mathcal P}1)$ can be guaranteed with probability one.  \newline

Next step is to prove that {\it a priori} assumption $({\mathcal P}1)$ on network structure can be satisfied for most of $\omega\in\Omega$, once we determine appropriate values for $n$ and $c$.
\begin{proposition}\label{P4.3}
Let $(X,V)$ be a solution process to \eqref{CS}, and let $n\in\bbn$ and $c>0$ be such that
\[ \sum_{k=1}^{N_G} (1-p_k)^{n}\leq\frac{1}{2} \quad\mbox{and}\quad c>   \frac{1}{\min\limits_{1 \le k \le N_G}\log \frac{1}{(1-p_k)}}. \]
Then, the following assertions hold.
\begin{enumerate}
\item
The subsequence $\{t_\ell^*\}_{\ell\in\bbn}\subset\{t_\ell\}_{\ell\in\bbn}$, defined by $t_\ell^*:= t_{a_\ell(n,c)}$ in \eqref{D-3-1}, satisfies
\begin{align*}
\begin{aligned}
& \mathbb{P} \Big( \omega : \mathcal{G}( [t_\ell^*, t_{\ell+1}^*))(\omega) \mbox{ has a spanning tree for any  $\ell \ge 0$} \Big ) \\
& \hspace{2cm}  \geq \exp\left( -(2\log 2) \sum_{k=1}^{N_G} (1-p_k)^n\sum_{\ell=0}^\infty  (1-p_k)^{\lfloor c \log (\ell+1)\rfloor}\right).
\end{aligned}
\end{align*}
\item
The serie $\sum\limits_{\ell=0}^\infty  (1-p_k)^{\lfloor c\log (\ell+1)\rfloor}$ converges for all $k=1,\cdots, N_G$. 
\end{enumerate}
\end{proposition}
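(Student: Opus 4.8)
The plan is to prove the two assertions of Proposition \ref{P4.3} by first reducing the spanning-tree property of each union graph $\mathcal{G}([t_\ell^*, t_{\ell+1}^*))$ to a statement about the random choices of topologies, and then controlling the resulting probabilities via the Borel-Cantelli machinery or a direct product estimate. The key observation is that, by assumption $(\mathcal{A}1)$, the union $\bigcup_{k=1}^{N_G} \mathcal{G}_k$ has a spanning tree. On the block $[t_\ell^*, t_{\ell+1}^*) = [t_{a_\ell}, t_{a_{\ell+1}})$, the number of switching instants is $a_{\ell+1}(n,c) - a_\ell(n,c) = n + \lfloor c\log(\ell+1)\rfloor$ by the recurrence \eqref{D-3-1}. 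The union graph over this block fails to have a spanning tree only if some admissible topology $\mathcal{G}_k$ is \emph{never} selected during these $n + \lfloor c\log(\ell+1)\rfloor$ independent draws; indeed, if every $\mathcal{G}_k$ appears at least once in the block, then the union equals $\bigcup_{k=1}^{N_G}\mathcal{G}_k$, which has a spanning tree. Thus I would first establish the bound
\[
\mathbb{P}\big(\mathcal{G}([t_\ell^*, t_{\ell+1}^*)) \text{ has no spanning tree}\big) \leq \sum_{k=1}^{N_G} (1-p_k)^{\,n + \lfloor c\log(\ell+1)\rfloor},
\]
using that the draws $\{\sigma_{t_j}\}$ are i.i.d. with $\mathbb{P}(\sigma_{t_j} = k) = p_k$, so the event ``$\mathcal{G}_k$ never chosen in the block'' has probability $(1-p_k)^{n+\lfloor c\log(\ell+1)\rfloor}$, and a union bound over $k$ gives the claim.

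For the first assertion, the target is a lower bound on the probability that \emph{all} blocks simultaneously carry a spanning tree. The plan is to bound the complement:
\[
\mathbb{P}\big(\exists\,\ell:\ \mathcal{G}([t_\ell^*, t_{\ell+1}^*)) \text{ has no spanning tree}\big) \leq \sum_{\ell=0}^\infty \sum_{k=1}^{N_G} (1-p_k)^{n}(1-p_k)^{\lfloor c\log(\ell+1)\rfloor},
\]
and then convert a bound of the form $\mathbb{P}(\text{all good}) \geq 1 - S$ into the stated exponential bound $\exp(-(2\log 2)\,S)$, where $S := \sum_k (1-p_k)^n \sum_\ell (1-p_k)^{\lfloor c\log(\ell+1)\rfloor}$. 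Here the hypothesis $\sum_{k} (1-p_k)^n \leq \tfrac12$ is precisely what makes each per-block factor small; the clean way to obtain the exponential form is to use the elementary inequality $1 - s \geq \exp(-(2\log 2)\,s)$ valid on $s \in [0, \tfrac12]$ (note $1-\tfrac12 = \tfrac12 = e^{-\log 2} = e^{-(2\log 2)\cdot \frac12}$, and the two sides agree at $s=0$ with the right side convex), applied to the aggregate tail $s = S$, after checking $S \le \tfrac12$ via assertion (2) together with the hypothesis. I would verify the product-versus-sum passage carefully, since the events over different blocks are not independent in an obvious way; but since we only need a \emph{lower} bound on ``all good,'' the union bound on the complement suffices and independence of blocks is not actually required.

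For the second assertion, the convergence of $\sum_{\ell=0}^\infty (1-p_k)^{\lfloor c\log(\ell+1)\rfloor}$ follows from a comparison argument. The plan is to drop the floor at the cost of a constant factor, using $\lfloor c\log(\ell+1)\rfloor > c\log(\ell+1) - 1$, so that
\[
(1-p_k)^{\lfloor c\log(\ell+1)\rfloor} \leq (1-p_k)^{-1}(1-p_k)^{c\log(\ell+1)} = (1-p_k)^{-1}(\ell+1)^{\,c\log(1-p_k)}.
\]
Writing $c\log(1-p_k) = -c\log\frac{1}{1-p_k}$, the series is dominated by a constant multiple of $\sum_\ell (\ell+1)^{-\beta_k}$ with exponent $\beta_k = c\log\frac{1}{1-p_k}$. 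The hypothesis $c > \big(\min_k \log\frac{1}{1-p_k}\big)^{-1}$ guarantees $\beta_k = c\log\frac{1}{1-p_k} > 1$ for every $k$, so each $p$-series converges, proving (2).

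I expect the main obstacle to be pinning down the precise constant $2\log 2$ and the correct regime for the elementary inequality converting the additive bound into the exponential one, i.e.\ ensuring that the aggregate tail $S$ genuinely lies in the interval $[0,\tfrac12]$ where $1-s \geq e^{-(2\log 2)s}$ holds. This requires assertion (2) (finiteness of the inner series) to be in hand first, and then the hypothesis $\sum_k(1-p_k)^n \leq \tfrac12$ must be leveraged so that the product $S = \sum_k (1-p_k)^n \cdot (\text{convergent inner sum})$ stays controlled. The graph-theoretic reduction and the $p$-series comparison are routine; the delicate bookkeeping lies entirely in the final probabilistic estimate and its constant.
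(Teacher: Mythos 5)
Your graph-theoretic reduction (the block fails to carry a spanning tree only if some $\mathcal{G}_k$ is never drawn, by $(\mathcal{A}1)$) and the resulting per-block bound $\sum_{k}(1-p_k)^{n+\lfloor c\log(\ell+1)\rfloor}$ coincide with the paper's, and your proof of assertion (2) is correct: dropping the floor via $\lfloor c\log(\ell+1)\rfloor > c\log(\ell+1)-1$ and comparing with the $p$-series $\sum_\ell (\ell+1)^{-\beta_k}$, $\beta_k = c\log\frac{1}{1-p_k}>1$, is a valid (and slightly more direct) alternative to the paper's Cauchy condensation argument. The genuine gap is in assertion (1). Your union bound gives only $\mathbb{P}(\text{all blocks good}) \ge 1-S$ with $S := \sum_k (1-p_k)^n\sum_\ell (1-p_k)^{\lfloor c\log(\ell+1)\rfloor}$, and your passage to $\exp\bigl(-(2\log 2)S\bigr)$ via the inequality $1-s\ge e^{-(2\log 2)s}$ requires the \emph{aggregate} tail to satisfy $S\le \tfrac12$ — which the hypotheses do not provide and which is false in general. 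The hypothesis $\sum_k(1-p_k)^n\le\tfrac12$ controls each block separately; the inner sum, by contrast, is at least $1$ (its $\ell=0$ term is $(1-p_k)^0=1$) and can be arbitrarily large when $c$ is close to the threshold $1/\min_k\log\frac{1}{1-p_k}$, since it behaves like $\sum_\ell(\ell+1)^{-\beta_k}$ with $\beta_k$ barely above $1$. Concretely, with $N_G=2$, $p_1=p_2=\tfrac12$, $n=2$, and $c$ slightly above $1/\log 2$, one finds $S\gg 1$, so $1-S<0$ and the union bound yields nothing, whereas the claimed lower bound $\exp\bigl(-(2\log 2)S\bigr)$ is strictly positive; hence your route cannot establish the stated inequality.

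The repair is precisely the independence you explicitly set aside as unnecessary. The event that $\mathcal{G}([t_\ell^*,t_{\ell+1}^*))$ has a spanning tree is measurable with respect to $\{\sigma_{t_j}: a_\ell(n,c)\le j < a_{\ell+1}(n,c)\}$, and these index blocks are pairwise disjoint while the draws $\{\sigma_{t_j}\}_j$ are i.i.d.; therefore the block events are mutually independent, and
\[
\mathbb{P}(\text{all blocks good})\;=\;\prod_{\ell=0}^\infty \mathbb{P}(\text{block $\ell$ good})\;\ge\;\prod_{\ell=0}^\infty\bigl(1-s_\ell\bigr),
\qquad s_\ell:=\sum_{k=1}^{N_G}(1-p_k)^{n+\lfloor c\log(\ell+1)\rfloor}.
\]
Now each \emph{individual} $s_\ell\le\sum_k(1-p_k)^n\le\tfrac12$, so the elementary inequality $\log(1-x)\ge-(2\log 2)x$ for $0\le x\le\tfrac12$ applies termwise, and summing the logarithms produces exactly the exponent $-(2\log 2)\sum_\ell s_\ell=-(2\log 2)S$; this is the paper's argument. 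Assertion (2) then guarantees the exponent is finite, so the bound is nontrivial. Everything else in your proposal stands; only this last probabilistic step needs to be replaced by the product-over-independent-blocks estimate.
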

\begin{proof}
\noindent (i)~For any $q$, $r\in\bbn$, we have the following estimate:
\begin{align*}
\begin{aligned}
&\mathbb{P}(\omega : \mathcal{G}([t_q, t_{q+r}))(\omega) \mbox{ does not have a spanning tree})\\
& \hspace{0.5cm} \le \mathbb{P}(\omega :\exists 1\leq k\leq N_G~~\mbox{ such that } \sigma_{t_{q+i}}(\omega)\neq k \mbox{ for }\forall~ 0\le i\le r-1)\\
& \hspace{0.5cm} \le \sum_{k=1}^{N_G} \mathbb{P}(\omega : \sigma_{t_{q+i}}(\omega)\neq k \mbox{ for }\forall~ 0\le i\le r-1) = \sum_{k=1}^{N_G} (1-p_k)^r,
\end{aligned}
\end{align*}
where the last inequality follows from the independence of $\{t_{\ell+1}-t_\ell\}_{\ell\in\bbn}$. \newline

This implies
\[
\mathbb{P}(\omega : \mathcal{G}([t_q, t_{q+r}))(\omega) \mbox{ has a spanning tree})\ge 1-\sum_{k=1}^{N_G} (1-p_k)^r.
\]
Here, we substitute $a_\ell(n,c)$ and $a_{\ell+1}(n,c)$ for $q$ and $q+r$, respectively, and take the product over $\ell\in\bbn$ to see the following relations:
\begin{align*}
\begin{aligned}
&\mathbb{P}(\omega : \mathcal{G}([t_\ell^*, t_{\ell+1}^*))(\omega) \mbox{ has a spanning tree for any } \ell \in\bbn)\\
& \hspace{0.2cm} = \prod_{\ell=0}^\infty \mathbb{P}(\omega : \mathcal{G}([t_\ell^*, t_{\ell+1}^*))(\omega) \mbox{ has a spanning tree }) \ge \prod_{\ell=0}^\infty \left(1-\sum_{k=1}^{N_G} (1-p_k)^{a_{\ell+1}-a_\ell}\right)\\
& \hspace{0.2cm}  = \exp\left(\sum_{\ell=0}^\infty \log\left(1-\sum_{k=1}^{N_G} (1-p_k)^{n+\lfloor c \log (\ell+1)\rfloor}\right) \right) \ge \exp\left( -(2\log 2) \sum_{\ell=0}^\infty \sum_{k=1}^{N_G} (1-p_k)^{n+\lfloor c \log (\ell+1)\rfloor}\right)\\
& \hspace{0.2cm} = \exp\left( -(2\log 2) \sum_{k=1}^{N_G} (1-p_k)^n\sum_{\ell=0}^\infty  (1-p_k)^{\lfloor c\log (\ell+1)\rfloor}\right),
\end{aligned}
\end{align*}
where we used the following inequality:
\[\log(1-x) \ge -(2\log 2)x , \quad 0 \le x \le \frac{1}{2}. \]

\vspace{0.2cm}

\noindent (ii)~The convergence of the series $\sum\limits_{\ell=0}^\infty  (1-p_k)^{\lfloor c \log (\ell+1)\rfloor}$ can be shown as follows: by comparison test, it suffices to show that
\[
\sum_{\ell=0}^\infty  (1-p_k)^{c\big(\log (\ell+1)-1\big)}<\infty\quad\Longleftrightarrow\quad\sum_{\ell=1}^\infty  (1-p_k)^{c\log \ell}<\infty.
\]
By Cauchy's condensation test, the right-hand side of the above is equivalent to
\[
\sum_{\ell=1}^\infty  2^\ell(1-p_k)^{c\log (2^\ell)}=\sum_{\ell=1}^\infty  \big(2(1-p_k)^{c\log 2}\big)^\ell<\infty.
\]
The condition $c>\frac{1}{\log \frac{1}{(1-p_k)}}$ is equivalent to $0<2(1-p_k)^{c\log2}<1$. Thus, we have the desired result.
\end{proof}

\vspace{0.2cm}

\noindent {\bf The proof of Theorem \ref{T3.1}}: We choose $\varepsilon$ to satisfy
\[ \displaystyle 0 \le \e < \frac{1}{N-1} - \frac{\kappa b}{\min\limits_{1\le k \le N_G} \log\frac{1}{1-p_k}}, \quad \mbox{or equivalently} \quad \frac{1}{\min\limits_{1\le k \le N_G} \log\frac{1}{1-p_k}} <  \frac{1-\e (N-1)}{\kappa b (N-1)}, \]
and we set
\[
c:= \frac{1}{2} \left( \frac{1}{\min\limits_{1\le k \le N_G} \log\frac{1}{1-p_k}} + \frac{1-\e (N-1)}{\kappa b (N-1)} \right).
\]
Then, it is easy to see that the constant $c$ defined above satisfies

\[
c > \frac{1}{\min\limits_{1\le k \le N_G} \log\frac{1}{1-p_k}}, \qquad \kappa b (N-1)c <1, \qquad \mbox{and} \qquad  0 \le \e < \frac{1-\kappa b(N-1)c}{N-1}.
\]

\noindent Now, we choose any $n \in \bbn$ such that
\[ \sum_{k=1}^{N_G} (1-p_k)^{n}\leq\frac{1}{2},  \]
and we define $p(n)$ as

\[
p(n) := \exp\left[  -(2\log 2) \sum_{k=1}^{N_G} (1-p_k)^n\sum_{\ell=0}^\infty  (1-p_k)^{\lfloor c \log (\ell+1)\rfloor} \right ].
\]
With this choice of $n$ and $c$, Proposition \ref{P4.3} implies
\[\mathbb{P}\Big \{ \omega : \mathcal{G}([t_\ell^*, t_{\ell+1}^*))(\omega) \mbox{ has a spanning tree for any } \ell \ge 0\Big \} \ge p(n), \quad t_\ell^* := t_{a_l(n,c)}. \]
Hence, it follows from Corollary \ref{C4.1} that
\[\mathbb{P}\Big\{ \omega : \exists \ x^\infty>0 \mbox{ s.t } \sup_{0\le t < \infty }\D(X(t,\omega)) \le x^\infty, \mbox{ and } \lim_{t \to \infty} \D(V(t,\omega)) =0 \Big \} \ge p(n). \]
Since $n$ can be arbitrarily large and $p(n) \to 1$ as $n \to \infty$, our desired result follows.


\vspace{0.4cm}

\section{Conclusion}\label{sec:5}
In this paper, we have introduced the Cucker-Smale model with  randomly switching topologies for flocking phenomenon, and provided a sufficient framework leading to the stochastic flocking in terms of system parameters and communication weight function. For the stochastic flocking modeling, we employed two random components for the switching times and selection of network topology at switching instant. Our flocking analysis took two procedures: First, we derived flocking estimates along the sample path in a priori setting on the network topologies and position diameter. Second, we replaced a priori assumption on the position diameter by suitable assumptions on the system parameters and communication weight, and moreover, we also showed that the a priori assumption on the network topology can be attained by imposing some condition on the network selection probability. There are still many questions to be investigated for the proposed model. For example, what if the support of the probability density function $f$ for the sequence $\{t_{\ell+1} - t_\ell\}_{\ell \geq 0}$ is not compactly supported, say $(0,\infty)$? Clearly, our analysis employed in the proof of main result breaks down for unbounded support case. However, it seems that our methodology and framework is quite general so that it can be applied to other CS type flocking and Kuramoto type synchronization models. These issues will be addressed in our future works.


\begin{thebibliography}{99}


\bibitem{A-H} S. Ahn and S.-Y. Ha: \textit{Stochastic flocking dynamics of the Cucker-Smale model with multiplicative white noises.} J. Math. Phys. {\bf 51} (2010) 103301.

\bibitem{B-C} M. Ballerini, N. Cabibbo, R. Candelier, A. Cavagna, E. Cisbani, I. Giardina, V. Lecomte, A. Orlandi, G. Parisi, A. Procaccini, M. Viale and V. Zdravkovic: \textit{Interaction ruling animal collective behavior depends on topological rather than metric distance:
Evidence from a field study.} Proc. Natl. Acad. Sci. USA {\bf 105} (2008) 1232-1237.

\bibitem{B-H} N. Bellomo and S.-Y. Ha: \textit{A quest toward a mathematical theory of the dynamics of swarms}, Math. Models Methods Appl. Sci. {\bf 27} (2017) 745-770.

\bibitem{B-C-C} F. Bolley, J. A. Canizo and J. A. Carrillo: \textit{Stochastic mean-field limit: Non-Lipschitz forces and swarming.} Math. Models Methods Appl. Sci. {\bf 21} (2011) 2179-2210.


\bibitem{C-F-R-T} J. A. Carrillo, M. Fornasier, J. Rosado and G. Toscani: \textit{Asymptotic flocking dynamics for the kinetic Cucker-Smale model.} SIAM J. Math. Anal. {\bf 42} (2010) 218–236.


\bibitem{C-H-L} Y.-P. Choi, S.-Y. Ha and Z. Li: \textit{Emergent dynamics of the Cucker-Smale flocking model and its variants, in Active Particles, Vol. 1: Theory, Models, Applications.} eds. N. Bellomo, P. Degond and E. Tadmor, Modeling and Simulation in Science and Technology (Birkh\"auser, 2017), pp. 299-331.

%

\bibitem{C-D} F. Cucker and J.-G. Dong: \textit{Avoiding collisions in flocks.} IEEE Trans. Automat. Contr. {\bf 55} (2010) 1238-1243.

\bibitem{C-D2} F. Cucker and J.-G. Dong: \textit{On flocks influenced by closest neighbors.} Math. Models Methods Appl. Sci. {\bf 26} (2016) 2685-2708.

\bibitem{C-D3} F. Cucker and J.-G. Dong: \textit{On flocks under switching directed interaction topologies.} SIAM J. Appl. Math {\bf 79} (2019) 95-110.

\bibitem{C-M} F. Cucker and E. Mordecki: \textit{Flocking in noisy environments.} J. Math. Pures Appl. {\bf 89} (2008) 278-296.


\bibitem{C-S} F. Cucker and S. Smale: \textit{Emergent behavior in flocks.} IEEE Trans. Automat. Contr. {\bf 52} (2007) 852-862.


\bibitem{D-H-K} J.-G. Dong, S.-Y. Ha and D. Kim: \textit{Emergent behaviors of continuous and discrete thermomechanical Cucker-Smale models on general digraphs.} Math. Models Meth. Appl. Sci. \textbf{29} (2019) 589-632.

\bibitem{D-H-K2} J.-G. Dong, S.-Y. Ha and D. Kim: \textit{Interplay of time-delay and velocity alignment in the Cucker-Smale model on a general digraph.} Discrete Contin. Dyn. Syst.-Ser. B. {\bf 22} (2019) 1-28.

\bibitem{D-Q} J.-G. Dong and L. Qiu: \textit{Flocking of the Cucker-Smale model on general digraphs.} IEEE Trans. Automat. Contr. {\bf 62} (2017) 5234-5239.

\bibitem{D-M} F. Dalmao and E. Mordecki: \textit{Cucker-Smale flocking under hierarchical leadership and random interactions.} SIAM J. Appl. Math {\bf 71} (2011), 1307-1316.

\bibitem{D-M2} F. Dalmao and E. Mordecki: \textit{Hierarchical Cucker-Smale model subject to random failure.} IEEE Trans. Automat. Contr. {\bf 57} (2012) 1789-1793.

\bibitem{E-H-S} R. Erban, J. Haskovec and Y. Sun: \textit{On Cucker-Smale model with noise and delay.} SIAM J. Appl. Math. {\bf 76} (2016) 1535-1557.

\bibitem{F-H-T} M. Fornasier, J. Haskovec and G. Toscani: \textit{Fluid dynamic description of flocking via Povzner-Boltzmann equation.} Physica D {\bf 240} (2011) 21-31.



\bibitem{H-K-R} S.-Y. Ha, J. Kim and T. Ruggeri: \textit{Emergent behaviors of thermodynamic Cucker-Smale particles.} SIAM J. Math. Anal. {\bf 50} (2018) 3092-3121.


%

\bibitem{H-L-L} S.-Y. Ha, K. Lee and D. Levy: \textit{Emergence of time-asymptotic flocking in a stochastic Cucker-Smale system.} Commun. Math. Sci. {\bf 7} (2009) 453-469.

\bibitem{H-L} S.-Y. Ha and J.-G. Liu: \textit{A simple proof of Cucker-Smale flocking dynamics and mean field limit.} Commun. Math. Sci. {\bf 7} (2009) 297-325.

\bibitem{H-T} S.-Y. Ha and E. Tadmor: \textit{From particle to kinetic and hydrodynamic description of flocking.} Kinet. Relat. Models {\bf 1} (2008) 415-435.

\bibitem{H-M} Y. He and X. Mu: \textit{Cucker-Smale flocking subject to random failure on general digraphs.} Automatica {\bf 106} (2019) 54-60.


\bibitem{L-H} Z. Li and S.-Y. Ha: \textit{On the Cucker-Smale flocking with alternating leaders.} Quart. Appl. Math. {\bf 73} (2015) 693-709.

\bibitem{L-X} Z. Li and X. Xue: \textit{Cucker-Smale flocking under rooted leadership with fixed and switching topologies.} SIAM J. Appl. Math. {\bf 70} (2010) 3156-3174.



\bibitem{M-T} S. Motsch and E. Tadmor: \textit{A new model for self-organized dynamics and its flocking behavior}, J. Stat. Phys. {\bf 144} (2011) 923-947.



\bibitem{P-E-G} L. Perea, P. Elosegui and G. G\'omez: \textit{Extension of the Cucker-Smale control law to space flight formation.} J. Guid. Control Dynam. {\bf 32} (2009) 527–537.


\bibitem{P-S} D. Poyato and J. Soler: \textit{Euler-type equations and commutators in singular and hyperbolic limits of kinetic Cucker-Smale models.} Math. Models Methods Appl. Sci. {\bf 27} (2017) 1089-1152.

\bibitem{R-L-X} L. Ru, Z. Li and X. Xue: \textit{Cucker-Smale flocking with randomly failed interactions.} J. Franklin Inst. {\bf 352} (2015) 1099-1118.

\bibitem{Rey} C. W. Reynolds: \textit{Flocks, herds, and schools.} Comput. Graph {\bf 21} (1987) 25-34.


\bibitem{Sontag} E.D. Sontag: \textit{Mathematical Control Theory.} 2nd edition Texts in Applied Mathematics {\bf 6},  Springer-Verlag 1998.

\bibitem{T-T} J. Toner and Y. Tu: \textit{Flocks, herds, and schools: A quantitative theory of flocking} Phys. Rev. E {\bf 58} (1998) 4828-4858.


\bibitem{V} T. Vicsek, A. Czir\'ok, E. Ben-Jacob, I. Cohen and O. Schochet: \textit{Novel type of phase transition in a system of self-driven particles.} Phys. Rev. Lett. {\bf 75} (1995) 1226-1229.

\bibitem{V2} T. Vicsek and A. Zefeiris: \textit{Collective motion.} Phys. Rep. {\bf 517} (2012) 71-140.


\bibitem{Wu06}
C.W. Wu: \textit{Synchronization and convergence of linear dynamics
in random directed networks.} IEEE Trans. Automat. Control {\bf 51} (2006), 1207--1210.


\end{thebibliography}
\end{document}